\newtheorem{thm}{Theorem}[section]
\newtheorem{lem}[thm]{Lemma}
\newtheorem{prop}[thm]{Proposition}
\theoremstyle{definition}
\newtheorem{rem}[thm]{Remark}
\begin{document}
\begin{frontmatter}
\title{On $\varepsilon$-Admissibility \\ in High Dimension and Nonparametrics \\}
\runtitle{On $\varepsilon$-Admissibility}
	\begin{aug}
\author{Keisuke Yano$\,^{1}$ \ead[label=e1]{yano@mist.i.u-tokyo.ac.jp}}
\and
\author{Fumiyasu Komaki$\,^{1,2}$ \ead[label=e2]{komaki@mist.i.u-tokyo.ac.jp}}\\
\affiliation{The University of Tokyo}
\address{$\,^{1}$Department of Mathematical Informatics,
Graduate School of
Information Science and Technology,
The University of Tokyo,
7-3-1 Hongo, Bunkyo-ku, Tokyo 113-8656, Japan}
\printead{e1,e2}\\
\address{$\,^{2}$RIKEN Brain Science Institute,
2-1 Hirosawa, Wako City,
Saitama 351-0198, Japan}
\runauthor{K. Yano and F. Komaki}
\end{aug}
\begin{abstract}
In this paper,
we discuss the use of $\varepsilon$-admissibility 
for estimation in high-dimensional and nonparametric statistical models.
The minimax rate of convergence is widely used to compare the performance of estimators in high-dimensional and nonparametric models.
However, it often works poorly as a criterion of comparison.
In such cases, the addition of comparison by $\varepsilon$-admissibility provides a better outcome.
We demonstrate the usefulness of $\varepsilon$-admissibility through high-dimensional Poisson model and Gaussian infinite sequence model,
and
present noble results.
\end{abstract}
\begin{keyword}[class=MSC]
\kwd{62C15, 62C20, 62G05}
\end{keyword}
\begin{keyword}
\kwd{Asymptotics}
\kwd{Bayes risk}
\kwd{Decision theory}
\kwd{Gaussian sequence model}
\kwd{Poisson model}
\end{keyword}
\end{frontmatter}

\section{Introduction}

Consider a statistical decision problem
in which
$\mathcal{X}$ is a sample space,
$\Theta$ is a parameter space,
and
$\mathcal{P}$ is a statistical model $\{P_{\theta}:\theta\in\Theta\}$
such that for each $\theta\in\Theta$, $P_{\theta}$ is a probability measure on $\mathcal{X}$.
Let $\mathcal{A}$ be an action space
and $\mathcal{D}$ a decision space,
comprising the whole set of measurable functions from $\mathcal{X}$ to $\mathcal{A}$.
Let $L$ be a loss function $\Theta\times \mathcal{A}\to\mathbb{R}\cup\{+\infty\}$
and $R$ a corresponding risk function defined by $R(\theta,\delta)=\int L(\theta,\delta(x)) \mathrm{d}P_{\theta}(x)$ 
for every $\theta\in\Theta$ and every $\delta\in\mathcal{D}$.

Our focus is on the use of $\varepsilon$-admissibility.
For $\varepsilon>0$, $\varepsilon$-admissibility is defined as follows:
an estimator $\delta$ is $\varepsilon$-admissible if and only if 
there exists no estimator $\tilde{\delta}$ such that for every $\theta\in\Theta$,
$R(\theta,\tilde{\delta})<R(\theta,\delta)-\varepsilon$.
In other words,
$\delta$ is
$\varepsilon$-admissible if
for any other estimator $\tilde{\delta}$,
$\delta$ is not inferior to $\tilde{\delta}$ at some $\theta\in\Theta$ when $\varepsilon$ is subtracted from the risk of $\delta$.
For $\delta\in\mathcal{D}$,
the infimum of possible values of $\varepsilon$ such that $\delta$ is $\varepsilon$-admissible is denoted by $\mathcal{R}(\Theta,\delta)$:
$$\mathcal{R}(\Theta,\delta):=\sup_{\tilde{\delta}\in\mathcal{D}}\inf_{\theta\in\Theta}[R(\theta,\delta)-R(\theta,\tilde{\delta})].$$
If $\mathcal{R}(\theta,\delta)=\varepsilon>0$, then $\delta$ is $\varepsilon$-admissible,
and
if $\delta$ is $\varepsilon$-admissible, then $\mathcal{R}(\Theta,\delta)\leq \varepsilon$.
A smaller value of $\mathcal{R}(\Theta,\delta)$ is preferable.
For further details,
see \citet{BlackwellandGirshick(1954)}, \citet{Farrell(1968)}, \citet{Ferguson_Book}, \citet{Hartigan_Book}, and \citet{HeathandSudderth(1978)}.
Although the concept of $\varepsilon$-admissibility was once widely studied in statistical decision theory,
as a research topic,
it has long been abandoned.

In the present paper,
we emphasize the use of $\varepsilon$-admissibility as a criterion for comparing the estimators in high-dimensional and nonparametric statistical models.
We show,
through two important examples,
that
by adding a comparison using the value of $\mathcal{R}(\Theta,\delta)$ to that using the minimax rate of convergence,
the performance of estimators in high-dimensional and nonparametric statistical models can be more successfully compared.
In high-dimensional and nonparametric models,
the minimax rate of convergence in an asymptotics
has been used to measure the performance of an estimator.
For example,
the minimax rate of convergence of an estimator $\delta$ in the asymptotics
in which the dimension $d$ of a parameter space $\Theta$ grows to infinity
is defined by
$d^{\alpha}$,
where $\alpha$ is the minimum number that satisfies
$0<\lim_{d\to\infty}\inf_{\delta\in\mathcal{D}}\sup_{\theta\in\Theta}R(\theta,\delta)/d^{\alpha}<\infty$.
When using the minimax approach,
the key criterion is
whether or not the rate of convergence of the estimator matches the minimax rate of convergence
(\citealp{Tsybakov_Book} and \citealp{Wasserman_Book}).
However,
the minimax rate of convergence often fails to clearly distinguish between estimators.
In such cases, adding a comparison using $\varepsilon$-admissibility can be helpful.
In the present study,
we investigated the use of $\varepsilon$-admissibility by application to two examples:
estimation of the mean in a high-dimensional Poisson model
and 
estimation of the mean in a Gaussian infinite sequence model.

We first show, using estimation of the mean in the high-dimensional Poisson model,
that
$\varepsilon$-admissibility preserves the dominating result in a finite dimensional setting, in contrast with the minimax approach.
Consider estimation of the mean in a $d$-dimensional Poisson model with an $\mathcal{L}^{1}$-constraint parameter space.
This estimation appears in discretization of an inhomogeneous Poisson point process model (see Appendix \ref{Appendix:Poissonpointprocess}).
In a setting in which $d>2$ is fixed,
it is known that the James--Stein type estimator $\hat{\theta}_{\mathrm{JS}}$
dominates the Bayes estimator $\hat{\theta}_{\mathrm{J}}$ based on Jeffreys' prior
when using the divergence loss;
see \citet{Komaki(2004)}, \citet{Komaki(2006)}, and \citet{Komaki(2015)}
and see also \citet{GhoshandYang(1988)}.
Here, $\delta$ is said to dominate $\tilde{\delta}$ if and only if $R(\theta,\delta)\leq R(\theta,\tilde{\delta})$ for all $\theta\in\Theta$
and there exists $\theta_{0}\in\Theta$ such that $R(\theta_{0},\delta)<R(\theta_{0},\tilde{\delta})$.
Unfortunately,
the minimax rate of convergence
can not determine whether $\hat{\theta}_{\mathrm{JS}}$ is superior to $\hat{\theta}_{\mathrm{J}}$
because 
the rates of convergence of both $\hat{\theta}_{\mathrm{J}}$ and a minimax estimator
are $d$.
In contrast,
by applying $\varepsilon$-admissibility,
we can decide that $\hat{\theta}_{\mathrm{J}}$ is better than $\hat{\theta}_{\mathrm{JS}}$ even in the asymptotic sense,
because a simple calculation introduced in Section \ref{Section:PoissonL1} will show
that $\lim_{d\to\infty}\mathcal{R}(\Theta,\hat{\theta}_{\mathrm{J}})/d>0$ and $\lim_{d\to\infty}\mathcal{R}(\Theta,\hat{\theta}_{\mathrm{JS}})< 2$.

We further show, through estimation of the mean in a Gaussian infinite sequence model,
that
$\varepsilon$-admissibility can quantify the degree of preference of one asymptotically minimax estimator
over another.
Consider the estimation of the mean in a Gaussian infinite sequence model with a Sobolev-type constraint parameter space.
This model is a canonical model in nonparametric statistics,
and 
has been shown to be statistically equivalent to the nonparametric regression model
(\citealp{Tsybakov_Book}, pp.~65--69).
In this context,
\citet{Zhao(2000)} demonstrated that
any Gaussian prior of which the Bayes estimator is asymptotically minimax places no mass on the parameter space.
Zhao also constructed a prior of which the Bayes estimator is asymptotically minimax and the mass on the parameter space is strictly positive.
See also \citet{ShenandWasserman(2001)}.
However, the goodness due to the strictly positive mass on the parameter space has yet been quantified.
We show that a modification of the prior discussed in \citet{Zhao(2000)} 
yields an asymptotically minimax Bayes estimator that,
from the viewpoint of $\varepsilon$-admissibility,
is superior to one based on the Gaussian prior.
This is discussed in Section \ref{Section:Gaussiansequence}.

Finally,
we address the relationship to admissibility.
Any Bayes estimator based on a prior on $\Theta$ is admissible and thus $\varepsilon$-admissible for any $\varepsilon>0$,
so that
$\varepsilon$-admissibility can not be used to compare such Bayes estimators.
However, in practice, 
estimators based on a prior that puts full mass on $\Theta$ are rarely used,
as there exist few settings in which the full information on $\Theta$ is known in advance.
The estimators discussed in Sections \ref{Section:PoissonL1} and \ref{Section:Gaussiansequence}
do not depend on knowing the full structure of $\Theta$.

The rest of the paper is organized as follows.
In Section \ref{Section:properties},
we introduce the properties of $\varepsilon$-admissibility and 
discuss its relationship with a related concept introduced by \citet{Chatterjee(2014)},
known as $C$-admissibility.
We also introduce the asymptotic notation.
Section \ref{Section:Discussions} concludes the paper.
An additional demonstration using the high-dimensional Gaussian sequence model with an $\mathcal{L}^{2}$-constraint parameter space 
is provided in Appendix \ref{Appendix:Additionalexample}.

\section{Preliminaries}\label{Section:properties}

\subsection{Bounds for $\varepsilon$-admissibility}
In this subsection,
we provide the general lower and upper bounds for $\mathcal{R}(\Theta,\delta)$
that are used in later sections.
While
these bounds are fundamental and have been widely used 
in the literature of statistical decision theoretic literature
(see, for example, Chapter 5 of \citet{LehmannandCasella_Book}),
the proofs help clarify the concept of $\varepsilon$-admissibility.
Throughout this subsection,
we use a fixed estimator $\delta\in\mathcal{D}$.

\begin{lem}
	\label{lower_maximin}
	For an estimator $\tilde{\delta}$ that dominates $\delta$,
	\begin{align*}
		\mathcal{R}(\Theta,\delta)\geq \inf_{\theta\in\Theta}[R(\theta,\delta)-R(\theta,\tilde{\delta})]\geq 0.
	\end{align*}
\end{lem}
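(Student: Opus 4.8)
The plan is to read both inequalities directly off the definition of $\mathcal{R}(\Theta,\delta)$ and the definition of domination; no analytic machinery is needed. For the first inequality, recall that $\mathcal{R}(\Theta,\delta)$ is the supremum of $\inf_{\theta\in\Theta}[R(\theta,\delta)-R(\theta,\cdot)]$ taken over all estimators in $\mathcal{D}$. Since the estimator $\tilde{\delta}$ of the hypothesis is itself a member of $\mathcal{D}$, the number $\inf_{\theta\in\Theta}[R(\theta,\delta)-R(\theta,\tilde{\delta})]$ is one of the quantities over which the supremum is formed, and is therefore at most that supremum. This gives
$$\mathcal{R}(\Theta,\delta)\geq \inf_{\theta\in\Theta}[R(\theta,\delta)-R(\theta,\tilde{\delta})]$$
immediately.

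For the second inequality I would invoke the definition of domination stated in the introduction: that $\tilde{\delta}$ dominates $\delta$ means $R(\theta,\tilde{\delta})\leq R(\theta,\delta)$ for every $\theta\in\Theta$ (with strict inequality at some $\theta_{0}$). Rearranging the pointwise inequality gives $R(\theta,\delta)-R(\theta,\tilde{\delta})\geq 0$ for all $\theta\in\Theta$; since a uniform lower bound on a function is also a lower bound for its infimum, we conclude $\inf_{\theta\in\Theta}[R(\theta,\delta)-R(\theta,\tilde{\delta})]\geq 0$. Chaining the two inequalities yields the claim.

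There is essentially no analytic obstacle here, as the statement is a direct unpacking of definitions. The only points deserving a moment's care are confirming that the hypothesised $\tilde{\delta}$ really does lie in the index set of the supremum (it does, being an estimator) and that the domination hypothesis is applied in the correct direction, so that the difference of risks is forced to be nonnegative rather than nonpositive. The worth of recording the lemma, as the authors indicate, is conceptual rather than technical: it certifies that whenever a dominating estimator exists, the uniform improvement it delivers is a concrete lower bound for $\mathcal{R}(\Theta,\delta)$, which is exactly what is needed later to derive conclusions of the form $\lim_{d\to\infty}\mathcal{R}(\Theta,\hat{\theta}_{\mathrm{J}})/d>0$.
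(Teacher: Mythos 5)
Your proof is correct and follows exactly the paper's argument: the first inequality is read off from the definition of $\mathcal{R}(\Theta,\delta)$ as a supremum over estimators including $\tilde{\delta}$, and the second from the pointwise risk inequality that domination provides. The only difference is that you spell out the steps more explicitly than the paper does.
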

\begin{proof}
	The first inequality follows immediately from the definition of $\mathcal{R}(\Theta,\delta)$.
	The second inequality follows since for any $\theta\in\Theta$, $R(\theta,\delta)\geq R(\theta,\tilde{\delta})$.
\end{proof}

\begin{lem}
	\label{upper_maximin}
	For a probability measure $\Pi$ on $\Theta$,
	\begin{align*}
		\mathcal{R}(\Theta,\delta)\leq \int_{\Theta}[R(\theta,\delta)-R(\theta,\delta_{\Pi})]\mathrm{d}\Pi(\theta),
	\end{align*}
	where $\delta_{\Pi}$ is the Bayes solution with respect to $\Pi$,
	i.e., the minimizer of $\int_{\Theta}R(\theta,\delta)\mathrm{d}\Pi(\theta)$.
\end{lem}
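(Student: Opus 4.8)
The plan is to combine two elementary observations: that the pointwise infimum of a risk difference over $\Theta$ can never exceed its average against a probability measure $\Pi$, and that $\delta_{\Pi}$ minimizes the Bayes risk by construction. Because the resulting bound does not involve the competing estimator $\tilde{\delta}$, it will bound the inner quantity $\inf_{\theta}[R(\theta,\delta)-R(\theta,\tilde{\delta})]$ uniformly in $\tilde{\delta}$, and therefore also its supremum over $\tilde{\delta}\in\mathcal{D}$, which is precisely $\mathcal{R}(\Theta,\delta)$.

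First I would fix an arbitrary $\tilde{\delta}\in\mathcal{D}$. Writing $c:=\inf_{\theta\in\Theta}[R(\theta,\delta)-R(\theta,\tilde{\delta})]$, we have $R(\theta,\delta)-R(\theta,\tilde{\delta})\geq c$ for every $\theta$, so integrating against the probability measure $\Pi$ yields
\begin{align*}
\inf_{\theta\in\Theta}[R(\theta,\delta)-R(\theta,\tilde{\delta})]\leq\int_{\Theta}[R(\theta,\delta)-R(\theta,\tilde{\delta})]\,\mathrm{d}\Pi(\theta).
\end{align*}
Next I would separate the right-hand side into $\int_{\Theta}R(\theta,\delta)\,\mathrm{d}\Pi(\theta)-\int_{\Theta}R(\theta,\tilde{\delta})\,\mathrm{d}\Pi(\theta)$ and invoke the defining optimality of $\delta_{\Pi}$, which gives $\int_{\Theta}R(\theta,\tilde{\delta})\,\mathrm{d}\Pi(\theta)\geq\int_{\Theta}R(\theta,\delta_{\Pi})\,\mathrm{d}\Pi(\theta)$. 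Substituting, the $\Pi$-average is at most $\int_{\Theta}[R(\theta,\delta)-R(\theta,\delta_{\Pi})]\,\mathrm{d}\Pi(\theta)$. Since this upper bound is independent of $\tilde{\delta}$, taking the supremum over $\tilde{\delta}\in\mathcal{D}$ on the left-hand side produces exactly $\mathcal{R}(\Theta,\delta)\leq\int_{\Theta}[R(\theta,\delta)-R(\theta,\delta_{\Pi})]\,\mathrm{d}\Pi(\theta)$.

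The only delicate point, and the one I would address explicitly, is that $L$ may take the value $+\infty$, so the integrals and the difference of risks need not always be well defined. The argument is clean whenever the Bayes risk $\int_{\Theta}R(\theta,\delta_{\Pi})\,\mathrm{d}\Pi(\theta)$ is finite, which is implicit in speaking of $\delta_{\Pi}$ as a genuine minimizer; the splitting of the integral and the comparison with $\tilde{\delta}$ are then justified without any $\infty-\infty$ ambiguity, while the cases with infinite Bayes risk make the asserted inequality hold trivially. I do not expect any substantive difficulty beyond this measure-theoretic bookkeeping, since the heart of the proof is the single observation that an infimum over $\Theta$ is dominated by any $\Pi$-average.
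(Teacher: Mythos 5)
Your proof is correct and is essentially the paper's own argument: both rest on the observation that an infimum over $\Theta$ is bounded by the $\Pi$-average, followed by the defining optimality of the Bayes solution $\delta_{\Pi}$; fixing $\tilde{\delta}$ first and taking the supremum at the end, as you do, is just a harmless reordering of the same steps. Your closing remark on the $+\infty$ loss values is a reasonable extra precaution that the paper's proof silently elides.
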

\begin{proof}
	Since for a function $f$ of $\theta$, $\inf_{\theta\in\Theta} f(\theta)\leq \int_{\Theta} f(\theta)\mathrm{d}\Pi(\theta)$,
	we have
	\begin{align*}
		\mathcal{R}(\Theta,\delta)=&\sup_{\tilde{\delta}}\inf_{\theta}[R(\theta,\delta)-R(\theta,\tilde{\delta})]
		\nonumber\\
		\leq&\sup_{\tilde{\delta}}\int_{\Theta}[R(\theta,\delta)-R(\theta,\tilde{\delta})]\mathrm{d}\Pi(\theta)
		\nonumber\\
		=&\int_{\Theta}R(\theta,\delta)\mathrm{d}\Pi(\theta)-\inf_{\tilde{\delta}}\int_{\Theta}R(\theta,\tilde{\delta})\mathrm{d}\Pi(\theta)
		\nonumber\\
		=&\int_{\Theta}R(\theta,\delta)\mathrm{d}\Pi(\theta)-\int_{\Theta}R(\theta,\delta_{\Pi})\mathrm{d}\Pi(\theta),
	\end{align*}
	where the last equality follows from the definition of the Bayes solution.
\end{proof}

Next,
we describe the relationships between $\varepsilon$-admissibility and admissibility and between $\varepsilon$-admissibility and minimaxity.
Although these relationships are not used in this paper,
they also help clarify the nature of $\varepsilon$-admissibility.
\begin{prop}
If $\delta$ is admissible, then $\mathcal{R}(\Theta,\delta)=0$.
If $\delta$ is minimax with a constant risk, then again $\mathcal{R}(\Theta,\delta)=0$.
\end{prop}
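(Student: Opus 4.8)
The plan is to prove both claims directly from the defining formula $\mathcal{R}(\Theta,\delta)=\sup_{\tilde{\delta}}\inf_{\theta}[R(\theta,\delta)-R(\theta,\tilde{\delta})]$, first establishing the trivial lower bound $\mathcal{R}(\Theta,\delta)\geq 0$ valid for every $\delta$. This holds because the choice $\tilde{\delta}=\delta$ makes the inner infimum equal to zero, so the supremum over all competing estimators is at least zero. For both parts it then suffices to prove the reverse inequality $\mathcal{R}(\Theta,\delta)\leq 0$.

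For the admissibility part I would argue by contradiction. Suppose $\mathcal{R}(\Theta,\delta)>0$. By the definition of the supremum there exists a competitor $\tilde{\delta}$ with $\inf_{\theta}[R(\theta,\delta)-R(\theta,\tilde{\delta})]>0$, say equal to some $c>0$. Then for every $\theta\in\Theta$ we have $R(\theta,\delta)-R(\theta,\tilde{\delta})\geq c>0$, that is, $R(\theta,\tilde{\delta})<R(\theta,\delta)$ uniformly in $\theta$. In particular $\tilde{\delta}$ satisfies the domination relation $R(\theta,\tilde{\delta})\leq R(\theta,\delta)$ for all $\theta$ with strict inequality at (any) one point, contradicting the admissibility of $\delta$. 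Hence $\mathcal{R}(\Theta,\delta)\leq 0$, and combined with the lower bound we obtain $\mathcal{R}(\Theta,\delta)=0$.

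For the minimax part I would instead compute $\mathcal{R}(\Theta,\delta)$ explicitly using the constant-risk hypothesis. Writing $R(\theta,\delta)=r$ for all $\theta$, the constant $r$ factors out of the inner infimum, giving $\inf_{\theta}[r-R(\theta,\tilde{\delta})]=r-\sup_{\theta}R(\theta,\tilde{\delta})$ for each $\tilde{\delta}$. Taking the supremum over $\tilde{\delta}$ then yields $\mathcal{R}(\Theta,\delta)=r-\inf_{\tilde{\delta}}\sup_{\theta}R(\theta,\tilde{\delta})$, whose last term is the minimax value. Minimaxity of $\delta$ together with its constant risk says precisely that $r=\sup_{\theta}R(\theta,\delta)$ equals this minimax value, so the difference vanishes and $\mathcal{R}(\Theta,\delta)=0$.

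I expect both arguments to be essentially routine; the only point requiring care is the passage from $\mathcal{R}(\Theta,\delta)>0$ to the existence of a genuinely dominating $\tilde{\delta}$, where one must note that a strictly positive inner infimum yields a uniform (not merely pointwise) strict improvement in the risk. A secondary caveat is that the manipulations implicitly require the relevant risks to be finite so that the differences $R(\theta,\delta)-R(\theta,\tilde{\delta})$ are well defined; this is harmless for the estimators of interest but worth keeping in mind, since $L$ is permitted to take the value $+\infty$.
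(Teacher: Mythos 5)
Your proof is correct and follows essentially the same route as the paper: the admissibility part is the paper's argument in contrapositive form (a strictly positive inner infimum would yield an estimator dominating $\delta$), and the minimax part is the identical computation reducing $\mathcal{R}(\Theta,\delta)$ to $r-\inf_{\tilde{\delta}}\sup_{\theta}R(\theta,\tilde{\delta})=0$. Your explicit inclusion of the trivial lower bound $\mathcal{R}(\Theta,\delta)\geq 0$ (via $\tilde{\delta}=\delta$) is a minor completeness improvement, since the paper's argument for the first claim only exhibits the upper bound $\mathcal{R}(\Theta,\delta)\leq 0$.
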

\begin{proof}
The first claim holds because from the admissibility of $\delta$, we have
$\inf_{\theta\in\Theta}[R(\theta,\delta)-R(\theta,\tilde{\delta})]\leq 0$ for any $\tilde{\delta}\in\mathcal{D}$
and thus $\sup_{\tilde{\delta}}\inf_{\theta\in\Theta}[R(\theta,\delta)-R(\theta,\tilde{\delta})]\leq 0$.
The second claim holds because 
letting $c := \inf_{\tilde{\delta}} \sup_{\theta} R( \theta , \tilde{\delta} )$ yields
\begin{align*}
	\mathcal{R}(\Theta,\delta)=&\sup_{\tilde{\delta}\in\mathcal{D}}\inf_{\theta\in\Theta}[R(\theta,\delta)-R(\theta,\tilde{\delta})]
	=c-\inf_{\tilde{\delta}}\sup_{\theta}R(\theta,\tilde{\delta})
	=0.
\end{align*}
\end{proof}

\subsection{Relationship to $C$-admissibility}

The concept of $C$-admissibility has appeared in the recent literature on estimation under the shape restriction,
and its connection to $\varepsilon$-admissibility should be noted.
An estimator $\delta$ is $C>0$-admissible if and only if
for every other estimator $\tilde{\delta}$,
there exists $\theta\in\Theta$ such that
$C\times R(\theta,\delta)\leq R(\theta,\tilde{\delta})$.
See \citet{Chatterjee(2014)} and \citet{Chenetal(2017)} for a discussion of this.

The only difference between $\varepsilon$-admissibility and $C$-admissibility is 
that $\varepsilon$-admissibility is based on the risk difference,
whereas $C$-admissibility is based on the risk ratio.
\citet{Chenetal(2017)} argues that
for a given estimator $\delta$,
the smallest value of $C$ for which $\delta$ is $C$-admissible
has a minimax interpretation:
\begin{align*}
	\sup\{C:\text{$\delta$ is $C$-admissible}\}=\inf_{\tilde{\delta}}\sup_{\theta\in\Theta}\frac{R(\theta,\tilde{\delta})}{R(\theta,\delta)}.
\end{align*}
Likewise,
the minus of the smallest value of $\varepsilon$ for which $\delta$ is $\varepsilon$-admissible
also has a minimax interpretation:
\begin{align}
-\inf\{\varepsilon:\text{$\delta$ is $\varepsilon$-admissible}\}=\inf_{\tilde{\delta}}\sup_{\theta\in\Theta}[R(\theta,\tilde{\delta})-R(\theta,\delta)].
\label{minimaxinterpretation}
\end{align}
The quantity (\ref{minimaxinterpretation}) itself is of interest.
\citet{OrlitskyandSuresh(2015)} conducted the regret analysis based on the quantity (\ref{minimaxinterpretation}) 
for some baseline estimator $\delta$.

The difference between the present paper and those of \citet{Chatterjee(2014)} and \citet{Chenetal(2017)}
is 
that
the latter addressed a universal bound for $C$
irrespective of the dimension of the parameter space and the sample size,
whereas 
our paper uses the rate of diminution of $\varepsilon$
as the dimension or the sample size grows to infinity for the performance comparison.

\subsection{Asymptotic notation}

In this subsection,
we set out the asymptotic notation used in later sections.

For positive functions $f(d)$ and $g(n)$,
the relation $f(d)\lesssim g(d)$ as $d\to\infty$ means 
that $$\lim_{d\to\infty}f(d)/g(d)<\infty.$$
The relation $f(d)\asymp g(d)$ as $d\to\infty$ means that $f(d)\lesssim g(d)$ and $g(d)\lesssim g(d)$.

\section{Poisson sequence model with $\mathcal{L}^{1}$-constraint parameter space}
\label{Section:PoissonL1}

In this section,
we present further details of the example discussed in the introduction.
Let $\mathcal{X}=\mathbb{N}^{d}$,
$\Theta=\{\theta=(\theta_{1},\ldots,\theta_{d}):\sum_{i=1}^{d}\theta_{i}/d\leq 1,\theta_{i}\geq 0, i=1,\ldots,d \}$,
and
$\mathcal{P}=\{P_{\theta}=\otimes_{i=1}^{d}\mathrm{Po}(\theta_{i}):\theta\in\Theta\}$,
where $\mathrm{Po}(\lambda)$ is a Poisson distribution with mean $\lambda$.
Let $\mathcal{A}=\mathbb{R}^{d}_{+}$ with the corresponding decision space $\mathcal{D}$.
Let $L(\theta,a)=D_{\mathrm{KL}}(P_{\theta}\mid\mid P_{a})$
with the corresponding risk function $R(\theta,\hat{\theta})$,
where $D_{\mathrm{KL}}(P_{\theta}\mid\mid P_{\theta'})$ is the Kullback--Leibler divergence from $P_{\theta}$ to $P_{\theta'}$:
\begin{align*}
D_{\mathrm{KL}}(P_{\theta}\mid\mid P_{\theta'}):=\int \log\frac{\mathrm{d}P_{\theta}}{\mathrm{d}P_{\theta'}}\mathrm{d}P_{\theta}
=\sum_{i=1}^{d}\left[\theta_{i}\log\frac{\theta_{i}}{\theta'_{i}}+\theta_{i}-\theta'_{i}\right].
\end{align*}

We discuss 
the performance of the following two estimators 
from the viewpoint of the minimax rate of convergence 
and 
that of $\varepsilon$-admissibility.
Let
\begin{align*}
	\hat{\theta}_{\mathrm{J},i}(X):=X_{i}+1/2,
	i=1,\ldots,d
\end{align*}
and
let
\begin{align*}
	\hat{\theta}_{\mathrm{JS},i}(X):=\frac{\sum_{j=1}^{d}X_{j}+1}{\sum_{j=1}^{d}X_{j}+d/2}(X_{i}+1/2),
	i=1,\ldots,d.
\end{align*}
The estimator $\hat{\theta}_{\mathrm{J}}$ is the Bayes estimator based on Jeffreys' prior
and
the estimator $\hat{\theta}_{\mathrm{JS}}$ is the James--Stein type estimator used in Poisson sequence models.
For further details, see \citet{Komaki(2004)} and \citet{Komaki(2006)}.

\subsection{Main results for the Poisson sequence model}

We first discuss the minimax rate of convergence.
The following theorem shows that, from the minimax rate of convergence,
it is impossible to determine whether $\hat{\theta}_{\mathrm{JS}}$ is better than $\hat{\theta}_{\mathrm{J}}$.
\begin{thm}\label{thm:Poissonminimax}
We have
\begin{align*}
\inf_{\hat{\theta}\in\mathcal{D}}\sup_{\theta\in\Theta}R(\theta,\hat{\theta})
\asymp
\sup_{\theta\in\Theta}R(\theta,\hat{\theta}_{\mathrm{J}})
\asymp
\sup_{\theta\in\Theta}R(\theta,\hat{\theta}_{\mathrm{JS}})
\asymp d
\end{align*}
as $d\to\infty$
\end{thm}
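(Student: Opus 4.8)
The plan is to trap all three displayed quantities, together with the minimax risk, between two constant multiples of $d$. Since $\hat{\theta}_{\mathrm{JS}}$ dominates $\hat{\theta}_{\mathrm{J}}$ for each fixed $d>2$ (\citealp{Komaki(2004)}, \citealp{Komaki(2006)}), and since the worst-case risk of any fixed estimator is never below the minimax risk, one has the chain
\[
\inf_{\hat{\theta}\in\mathcal{D}}\sup_{\theta\in\Theta}R(\theta,\hat{\theta})
\ \le\ \sup_{\theta\in\Theta}R(\theta,\hat{\theta}_{\mathrm{JS}})
\ \le\ \sup_{\theta\in\Theta}R(\theta,\hat{\theta}_{\mathrm{J}}).
\]
It therefore suffices to establish two bounds: an upper bound $\sup_{\theta}R(\theta,\hat{\theta}_{\mathrm{J}})\lesssim d$ and a lower bound $d\lesssim\inf_{\hat{\theta}}\sup_{\theta}R(\theta,\hat{\theta})$. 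Together these pin the leftmost and rightmost members of the chain to order $d$, forcing every quantity in between --- in particular $\sup_{\theta}R(\theta,\hat{\theta}_{\mathrm{JS}})$ --- to be $\asymp d$ as well.

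For the upper bound I would exploit the product structure. Because the model $\otimes_{i}\mathrm{Po}(\theta_{i})$, the loss, and the estimator $\hat{\theta}_{\mathrm{J}}$ all factorize over coordinates, the risk splits as $R(\theta,\hat{\theta}_{\mathrm{J}})=\sum_{i=1}^{d}r(\theta_{i})$, where $r(\lambda):=E_{X\sim\mathrm{Po}(\lambda)}[L(\lambda,X+1/2)]$ is the single-coordinate Jeffreys risk. The crux is to show $\sup_{\lambda\ge0}r(\lambda)=:C_{\mathrm{J}}<\infty$; granting this, $\sup_{\theta}R(\theta,\hat{\theta}_{\mathrm{J}})\le d\,C_{\mathrm{J}}\lesssim d$ uniformly over $\Theta$, irrespective of the $\mathcal{L}^{1}$-constraint.

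For the lower bound I would invoke the classical fact that the minimax risk dominates the Bayes risk of any prior supported on $\Theta$. Take a product prior $\Pi=\pi^{\otimes d}$ with $\pi$ a fixed nondegenerate distribution on $[0,1]$ (for instance the uniform); then $\mathrm{supp}\,\Pi\subseteq[0,1]^{d}\subseteq\Theta$, so the $\mathcal{L}^{1}$ constraint is automatically met. Additivity of the loss and independence of the coordinates make the Bayes problem separate, so the Bayes risk equals $d\,B(\pi)$, where $B(\pi)>0$ is the (constant, $d$-free) Bayes risk of estimating a single Poisson mean under the Kullback--Leibler loss. Hence $\inf_{\hat{\theta}}\sup_{\theta}R(\theta,\hat{\theta})\ge d\,B(\pi)$, i.e.\ $d\lesssim\inf_{\hat{\theta}}\sup_{\theta}R(\theta,\hat{\theta})$.

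The main obstacle is the uniform bound $\sup_{\lambda\ge0}r(\lambda)<\infty$. On any compact $\lambda$-interval $r$ is continuous and hence bounded, so the real work is the regime $\lambda\to\infty$, where writing out $r(\lambda)=\lambda\log\lambda-\lambda\,E[\log(X+1/2)]+O(1)$ (the $O(1)$ collecting the linear terms, whose expectation is bounded) reveals two pieces that each diverge and must be shown to cancel to $O(1)$. I would set $U=(X+1/2-\lambda)/\lambda$, use the elementary inequality $\log(1+u)\ge u-u^{2}$, valid for $u\ge-1/2$, on the high-probability event $\{X\ge\lambda/2\}$, and absorb the complementary event via $\log(X+1/2)\ge\log(1/2)$ together with the exponentially small Poisson tail $P(X<\lambda/2)$. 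Since $E[U]=1/(2\lambda)$ and $E[U^{2}]=O(1/\lambda)$, this gives $E[\log(X+1/2)]\ge\log\lambda-O(1/\lambda)$, whence $\lambda\bigl(\log\lambda-E[\log(X+1/2)]\bigr)=O(1)$ and thus $r(\lambda)=O(1)$ as $\lambda\to\infty$; the finite limit at $\lambda=0$ is immediate. Establishing this cancellation cleanly is the one genuinely delicate computation in the proof.
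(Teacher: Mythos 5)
Your proposal is correct, but it reaches the result by a genuinely different route than the paper. The paper's proof establishes only the minimax lower bound explicitly: it bounds $\inf_{\hat{\theta}}\sup_{\theta}R(\theta,\hat{\theta})$ from below by the Bayes risk of a non-product prior $\Pi$ mixing a point mass at the origin with a distribution supported on the boundary $\|\theta\|_{1}=d$ whose direction is Dirichlet$(1/2,\ldots,1/2)$, and then evaluates that Bayes risk exactly (Claim C1, an expression in digamma functions and Poisson expectations) and asymptotically (Claim C2), obtaining $b(\Pi)\geq\tfrac{d}{2}(1+\psi(3/2)+\log(1-\varepsilon)+\mathrm{o}_{\varepsilon}(1))$; the matching upper bounds $\sup_{\theta}R(\theta,\hat{\theta}_{\mathrm{J}})\lesssim d$ and $\sup_{\theta}R(\theta,\hat{\theta}_{\mathrm{JS}})\lesssim d$ are never proved there and are implicitly taken from the cited literature. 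You instead obtain the lower bound from an i.i.d.\ prior uniform on $[0,1]^{d}\subseteq\Theta$, whose Bayes risk tensorizes to $d\,B(\pi)$ with $B(\pi)\in(0,\infty)$ free of $d$ --- no closed-form evaluation needed, only positivity and finiteness of a one-dimensional Bayes risk --- and you supply the upper bound yourself via the uniform one-coordinate bound $\sup_{\lambda\geq 0}r(\lambda)<\infty$, closing the sandwich with the cited domination of $\hat{\theta}_{\mathrm{J}}$ by $\hat{\theta}_{\mathrm{JS}}$. Your version is more elementary and more self-contained (it proves the upper bound that the paper's proof omits), at the cost of explicit constants: the paper's boundary-supported prior yields a concrete constant $\tfrac{1}{2}(1+\psi(3/2)+\log(1-\varepsilon))$, and that construction remains usable when the constraint set does not contain a full cube, whereas your tensorization exploits precisely that $[0,1]^{d}\subseteq\Theta$, i.e.\ that the $\mathcal{L}^{1}$ radius scales linearly in $d$. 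Two details to tighten in a full write-up: when you apply $\log(1+u)\geq u-u^{2}$ on $\{X\geq\lambda/2\}$, removing the indicator from $\mathrm{E}[(U-U^{2})1_{\{X\geq\lambda/2\}}]$ leaves an extra term that should be bounded by Cauchy--Schwarz against the exponentially small tail $\mathrm{Pr}(X<\lambda/2)$; and positivity of $B(\pi)$ deserves one line (under the uniform prior the posterior has a density, so the posterior expected Kullback--Leibler loss is strictly positive for every observed $x$).
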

Next, we discuss $\varepsilon$-admissibility.
The following theorem shows that, from the viewpoint of $\varepsilon$-admissibility,
the James--Stein type estimator is superior to the Bayes estimator based on Jeffreys' prior.
\begin{thm}\label{thm:Poissonweakadmissibility}
We have
\begin{align*}
\mathcal{R}(\Theta,\hat{\theta}_{\mathrm{JS}}) \lesssim 1\lesssim d\lesssim \mathcal{R}(\Theta,\hat{\theta}_{\mathrm{J}})
\end{align*}
as $d\to\infty$.
\end{thm}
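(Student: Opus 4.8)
The plan is to prove the two inequalities separately, invoking Lemma~\ref{lower_maximin} for the lower bound on $\mathcal{R}(\Theta,\hat{\theta}_{\mathrm{J}})$ and Lemma~\ref{upper_maximin} for the upper bound on $\mathcal{R}(\Theta,\hat{\theta}_{\mathrm{JS}})$. For the lower bound I would simply take $\hat{\theta}_{\mathrm{JS}}$ as the competitor in the supremum defining $\mathcal{R}$, so that $\mathcal{R}(\Theta,\hat{\theta}_{\mathrm{J}})\ge\inf_{\theta\in\Theta}[R(\theta,\hat{\theta}_{\mathrm{J}})-R(\theta,\hat{\theta}_{\mathrm{JS}})]$ (the positivity of this gap established below also re-proves the domination, so Lemma~\ref{lower_maximin} applies), and reduce everything to showing that this risk gap is of order $d$ uniformly in $\theta$. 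For the upper bound I would construct a proper prior $\Pi$ supported on $\Theta$ whose Bayes solution $\delta_{\Pi}$ is a mild perturbation of $\hat{\theta}_{\mathrm{JS}}$, and bound the integrated risk difference in Lemma~\ref{upper_maximin} by a constant.

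For the lower bound, the decisive computation is that the two risks share the terms $\sum_i\theta_i\log\theta_i$ and $\sum_i\theta_i E_{\theta}[\log(X_i+1/2)]$, which cancel in the difference. Writing $S=\sum_{j}X_j$ and $c(S)=(S+1)/(S+d/2)$ and using $E_{\theta}[X_i]=\theta_i$ together with $c(S)(S+d/2)=S+1$, one obtains
\begin{align*}
R(\theta,\hat{\theta}_{\mathrm{J}})-R(\theta,\hat{\theta}_{\mathrm{JS}})=\frac{d}{2}-1+\Big(\sum_{i=1}^{d}\theta_i\Big)\,E_{\theta}[\log c(S)].
\end{align*}
Because $S\sim\mathrm{Po}(s)$ with $s=\sum_i\theta_i\in[0,d]$, the gap depends on $\theta$ only through $s$, so the problem collapses to minimizing a single scalar function $G(s)$ over $[0,d]$. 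Setting $s=\alpha d$, the concentration of $S$ around $s$ suggests $G(\alpha d)/d\to h(\alpha):=\tfrac12+\alpha\log\frac{2\alpha}{2\alpha+1}$, and an elementary calculation shows $h$ is minimized at $\alpha=1$ with $h(1)=\tfrac12+\log(2/3)>0$; this yields $\inf_{\theta}[R(\theta,\hat{\theta}_{\mathrm{J}})-R(\theta,\hat{\theta}_{\mathrm{JS}})]\gtrsim d$. The main obstacle is to make this limit rigorous uniformly in $s$: the bound $\log(1+x)\le x$ is too lossy near $\alpha=1$, so I would localize $S$ to a shrinking window about its mean via a Poisson (Chernoff) tail bound, estimate $s\,E_{\theta}[\log c(S)]$ on the window, and bound the tail separately, obtaining $G(s)\ge(h(1)-o(1))\,d$ for all $s\in[0,d]$.

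For the upper bound I would reparametrize $\theta=\eta\phi$ with $\eta=\sum_i\theta_i$ and $\phi$ on the probability simplex. Under the Kullback--Leibler loss the Bayes solution is the posterior mean $\delta_{\Pi}(X)=E_{\Pi}[\theta\mid X]$, and for the prior that is $\mathrm{Dirichlet}(1/2,\dots,1/2)$ in $\phi$ and flat in $\eta$ one checks that $E[\eta\mid S]=S+1$ and $E[\phi_i\mid X]=(X_i+1/2)/(S+d/2)$, so that the (generalized) Bayes solution is exactly $\hat{\theta}_{\mathrm{JS}}$. Taking $\Pi$ to be this prior with $\eta$ truncated to $(0,d]$ gives a proper measure on $\Theta$ whose Bayes solution $\delta_{\Pi}$ differs from $\hat{\theta}_{\mathrm{JS}}$ only by the common scalar factor $r(S)=E_{\Pi}[\eta\mid S]/(S+1)\le 1$. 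I would then use the standard regret identity for Kullback--Leibler loss,
\begin{align*}
\int_{\Theta}[R(\theta,\delta)-R(\theta,\delta_{\Pi})]\,\mathrm{d}\Pi(\theta)=E_{X}\big[L(\delta_{\Pi}(X),\delta(X))\big],
\end{align*}
where $X$ follows the prior predictive law; combined with $\sum_i\hat{\theta}_{\mathrm{JS},i}=S+1$ this collapses the bound of Lemma~\ref{upper_maximin} to $\mathcal{R}(\Theta,\hat{\theta}_{\mathrm{JS}})\le E_X[(S+1)\{r(S)\log r(S)-r(S)+1\}]$. Since $r(S)=1$ whenever the truncation is inactive and $r\log r-r+1\asymp(1-r)^2$ near $r=1$, the only appreciable contribution comes from $S$ near $d$; there a Gaussian-tail estimate for the truncated Gamma posterior gives $1-r(S)=O(d^{-1/2})$, so each such summand is $O(1)$ and the whole expectation stays bounded. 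The main obstacle is precisely this boundary analysis---quantifying $r(S)$ and the predictive mass of $\{S\approx d\}$ so that the accumulated divergence remains $O(1)$ as $d\to\infty$.
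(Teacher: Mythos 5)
Your proposal is correct in both directions, but it takes a genuinely different --- and much heavier --- route than the paper, especially for the upper bound. For the lower bound your reduction is exactly the paper's: compete against $\hat{\theta}_{\mathrm{JS}}$, use the identity $R(\theta,\hat{\theta}_{\mathrm{J}})-R(\theta,\hat{\theta}_{\mathrm{JS}})=(d/2-1)+(\sum_{i}\theta_{i})\mathrm{E}_{\theta}[\log c(S)]$, and minimize over the scalar $\mu=\sum_{i}\theta_{i}\in[0,d]$; you even land on the same constant $1/2+\log(2/3)$. The difference is how the scalar minimization is controlled: you propose Chernoff localization of $S$ with uniform-in-$\mu$ errors (workable, but it needs a separate treatment of small $\mu$ and yields only an asymptotic statement), whereas the paper needs no concentration at all --- since $\log$ is concave, Jensen gives $\mathrm{E}\log\bigl(1+\tfrac{d/2-1}{Z+1}\bigr)\le\log\bigl(1+(d/2-1)\mathrm{E}[1/(Z+1)]\bigr)$, and the exact identity $\mathrm{E}_{Z\sim\mathrm{Po}(\mu)}[1/(Z+1)]=(1-\mathrm{e}^{-\mu})/\mu$ produces the closed-form, non-asymptotic bound $-d\log\bigl(1+(d/2-1)(1-\mathrm{e}^{-d})/d\bigr)+d/2-1$, whose leading constant is again $1/2+\log(2/3)$. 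For the upper bound the contrast is starker: the paper applies Lemma~\ref{upper_maximin} with $\Pi=\delta_{0}$; since $0\in\Theta$ and $L(0,a)=\sum_{i}a_{i}$, the Bayes solution is $\delta\equiv 0$ with zero Bayes risk, so $\mathcal{R}(\Theta,\hat{\theta}_{\mathrm{JS}})\le R(0,\hat{\theta}_{\mathrm{JS}})=\sum_{i}\hat{\theta}_{\mathrm{JS},i}(0)$, a constant --- one line, because Lemma~\ref{upper_maximin} only asks for \emph{some} probability measure on $\Theta$, degenerate ones included. Your construction is nevertheless sound: the flat-in-$\eta$ times $\mathrm{Dirichlet}(1/2,\dots,1/2)$-in-$\phi$ prior truncated to $\eta\le d$ is proper on $\Theta$, its posterior factorizes into a truncated $\mathrm{Gamma}(S+1,1)$ for $\eta$ times a Dirichlet for $\phi$, so $\delta_{\Pi}=r(S)\,\hat{\theta}_{\mathrm{JS}}$ exactly, and your regret identity holds because $\delta_{\Pi}$ is the posterior mean. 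One caution: the claim that each summand near the boundary is $O(1)$ is not quite right for $S-d\gg\sqrt{d}$, where $1-r(S)=\mathrm{Pr}(\mathrm{Po}(d)=S+1)/\mathrm{Pr}(\mathrm{Po}(d)\ge S+1)\asymp(S-d)/d$ and the summand grows like $(S-d)^{2}/d$; you must invoke the rapidly decaying predictive mass of that region (which you flag as the remaining obstacle) to close the argument. If carried out, your extra work does buy something the paper's one-liner does not: the resulting bound is of order $d^{-1/2}$, i.e.\ vanishing rather than merely bounded, and it exhibits a natural proper prior on $\Theta$ for which $\hat{\theta}_{\mathrm{JS}}$ is nearly Bayes.
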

The proofs of theorems are given in the next subsection.

\subsection{Proofs of theorems}

In this subsection,
we give the proofs of Theorems \ref{thm:Poissonminimax} and \ref{thm:Poissonweakadmissibility}

\begin{proof}[Proof of Theorem \ref{thm:Poissonminimax}]
This proof relies on the fact that a minimax risk is bounded below by a Bayes risk:
For a probability distribution $\Pi$ on $\Theta$,
we have
\begin{align}
	\inf_{\hat{\theta}\in\mathcal{D}}\sup_{\theta\in\Theta}R(\theta,\hat{\theta})
	\geq\int R(\theta,\hat{\theta}_{\Pi}) \mathrm{d}\Pi(\theta),
\label{eq:Bayesriskbound}
\end{align}
where $\hat{\theta}_{\Pi}$ is the Bayes solution with respect to $\Pi$.
Let $\Pi$ be
\begin{align*}
	\Pi(\mathrm{d}\theta)=\frac{1}{2}\delta_{0}(\mathrm{d}\theta)+
	\frac{1}{2}\delta_{d}(\mathrm{d}\|\theta\|_{1})\otimes
	\mathrm{Dir}\left(\frac{1}{2},\cdots,\frac{1}{2}\right)\left(\mathrm{d}\frac{\theta_{1}}{\|\theta\|_{1}},\ldots,\mathrm{d}\frac{\theta_{d}}{\|\theta\|_{1}}\right),
\end{align*}
where $\delta_{x}$ is the Dirac measure having a mass on $x$,
$\mathrm{Dir} (1/2,\ldots,1/2) \allowbreak ( \mathrm{d}x_{1}, \ldots, \mathrm{d}x_{d} )$ 
is the Dirichlet distribution of which the density is proportional to $ x_{1}^{1/2-1}\times \cdots \times x_{d}^{1/2-1}$,
and
$\| \theta \|_{1} := \sum_{i} | \theta_{i} |$.

First,
we show that the Bayes risk $b(\Pi):=\int R(\theta,\hat{\theta}_{\Pi})\mathrm{d}\Pi(\theta)$ is of order $d$
by assuming that the following two claims hold:
\begin{enumerate}
\setlength{\leftskip}{2cm}
\item[Claim C1.] $b(\Pi)$ is given by
\begin{align}
	b(\Pi)=\frac{d}{2}&\left\{\mathrm{e}^{-d}\log(1+\mathrm{e}^{d})+\psi(3/2)-\psi(d/2+1)
	\right.
	\nonumber\\
	&\left.+\mathrm{E}_{X\sim\mathrm{Po}(d)}\log(d/2+X)
	\right.
	\nonumber\\
	&\left.-\mathrm{E}_{b\sim\mathrm{Beta}(3/2,d/2-1)}\mathrm{E}_{X\sim\mathrm{Po}(bd)}\log(1/2+X)\right\},
	\label{eq:Bayesrisk}
\end{align}
where $\psi(\cdot)$ is the digamma function, that is, the derivative of the log of the Gamma function;
\item[Claim C2.] for any $\varepsilon\in(0,1)$, the asymptotic inequality
	\begin{align*}\mathrm{E}_{X\sim\mathrm{Po}(d)}\log(d/2+X)\geq\log(3d/2)+\log(1-\varepsilon)+\mathrm{o}_{\varepsilon}(1)\end{align*} holds,
\end{enumerate}
where $\mathrm{o}_{\varepsilon}(1)$ is the $\mathrm{o}(1)$ term depending on $\varepsilon$.

Applying Jensen's inequality to $x\to\log(1/2+x)$ yields
\begin{align*}
\mathrm{E}_{b\sim\mathrm{Beta}(3/2,d/2-1)}\mathrm{E}_{X\sim\mathrm{Po}(bd)}\log(1/2+X)\leq 
\mathrm{E}_{b\sim\mathrm{Beta}(3/2,d/2-1)}\log(1/2+bd).
\end{align*}
Since $\mathrm{Beta}(3/2,d/2-1)$ converges weakly to $\delta_{1}$ as $d \to \infty$,
we have
\begin{align}
	\mathrm{E}_{b\sim\mathrm{Beta}(3/2,d/2-1)}\log(1/2+bd)=\log d+\mathrm{o}(1).
	\label{eq:lastterm}
\end{align}
Note that $b\in[0,1]\to\log(1/2+b)$ is bounded and continuous.
Thus,
from Claims C1 and C2,
from the asymptotic inequality (\ref{eq:lastterm}),
and
from the asymptotic relationship that $\psi(d/2+1)=\log(d/2)+\mathrm{o}(1)$ (see \citet{NIST}),
we have
\begin{align*}
	b(\Pi)\geq\frac{d}{2}(1+\psi(3/2)+\log(1-\varepsilon)+\mathrm{o}_{\varepsilon}(1)).
\end{align*}
Taking $\varepsilon$ such that $1+\psi(3/2)+\log(1-\varepsilon)>0$,
it is shown that the Bayes risk $b(\Pi):=\int R(\theta,\hat{\theta}_{\Pi})\mathrm{d}\Pi(\theta)$ is of order $d$.

\vspace{5mm}

Next, we prove that Claim C1 holds.
Let $\mathrm{S}:=\mathrm{Dir}(1/2,\ldots,1/2)$.
For $i=1,\ldots,d$,
we have
\begin{align*}
	\hat{\theta}_{\Pi,i}(X)=
	\begin{cases}
		d\times \frac{\int w_{i}(\prod_{j=1}^{d}w_{j}^{x_{j}})\mathrm{d}\mathrm{S}(w_{1},\ldots,w_{d}) }
		{ \int (\prod_{j=1}^{d}w_{j}^{x_{j}})\mathrm{d}\mathrm{S}(w_{1},\ldots,w_{d}) }	
		& \text{ if $x_{j}\neq 0$ for some $j$}, \\
		d\times \frac{\mathrm{e}^{-d}}{1+\mathrm{e}^{-d}}\int w_{i}\mathrm{d}\mathrm{S}(w_{1},\ldots,w_{d})
		& \text{ if all $x_{j}$s are 0}.
	\end{cases}
\end{align*}
Substituting the above expression of $\hat{\theta}_{\Pi}$ into $R(\theta,\hat{\theta}_{\Pi})$,
we have
\begin{align}
	R(\theta,\hat{\theta}_{\Pi})=&
	\sum_{i=1}^{d}\theta_{i}\log\frac{\theta_{i}}{d/d}+\left(\sum_{i=1}^{d}\theta_{i}\right)\mathrm{e}^{-\sum_{i=1}^{d}\theta_{i}}\log(1+\mathrm{e}^{d})
	\nonumber\\
	&-\left(\sum_{i=1}^{d}\theta_{i}\right)+d-d\mathrm{e}^{-\sum_{i=1}^{d}\theta_{i}}\frac{1}{1+\mathrm{e}^{-d}}
	\nonumber\\
	&-\left(\sum_{i=1}^{d}\theta_{i}\right)\log d
	\nonumber\\
	&+\left(\sum_{i=1}^{d}\theta_{i}\right)\mathrm{E}_{X\sim\mathrm{Po}(\sum_{i=1}^{d}\theta_{i})}\log(d/2+X)
	\nonumber\\
	&-\sum_{i=1}^{d}\theta_{i}\mathrm{E}_{X_{i}\sim\mathrm{Po}(\theta_{i})}\log(1/2+X_{i}).
	\label{eq:risk_Pi}
\end{align}
Since 
\begin{align*}
	\int\sum_{i=1}^{d}\theta_{i}\log\theta_{i}\mathrm{d}\Pi(\theta)=&d\log d+d\int \sum_{i}^{d}w_{i}\log w_{i}\mathrm{d}S(w_{1},\ldots,w_{d})
	\nonumber\\
	=&d\log d+\psi(3/2)-\psi(d/2+1),
\end{align*}
taking the expectation of the right hand side of (\ref{eq:risk_Pi}) 
over $\theta$ with respect to $\Pi$ 
shows that Claim C1 holds.

\vspace{5mm}

Finally, we prove that Claim C2 holds.
We have
\begin{align*}
	\mathrm{E}_{X\sim\mathrm{Po}(d)}\log(d/2+X)&=\log(3d/2)+\mathrm{E}_{X\sim\mathrm{Po}(d)}\log\left(1+\frac{2}{3\sqrt{d}}\frac{X-d}{\sqrt{d}}\right).
\end{align*}
Since for a random variable $X$ distributed according to $\mathrm{Po}(d)$,
$(X-d)/d$ converges to $0$ in probability as $d \to \infty$,
we have,
for any $\varepsilon\in(0,1)$,
\begin{align*}
	\mathrm{E}_{X\sim\mathrm{Po}(d)}&\log\left(1+\frac{2}{3\sqrt{d}}\frac{X-d}{\sqrt{d}}\right)
	\nonumber\\
	&\geq\log(1-\varepsilon)+\mathrm{E}_{X\sim\mathrm{Po}(d)}1_{|2(X-d)/d|>\varepsilon}\log\left(1+\frac{2}{3\sqrt{d}}\frac{X-d}{\sqrt{d}}\right)
	\nonumber\\
	&\geq\log(1-\varepsilon)+\mathrm{Pr}(|2(X-d)/d|>\varepsilon) \log(1/3)
	\nonumber\\
	&=\log(1-\varepsilon)+\mathrm{o}_{\varepsilon}(1),
\end{align*}
where $X$ is a random variable distributed according to $\mathrm{Po}(d)$.
This completes the proof.

\end{proof}

\begin{proof}[Proof of Theorem \ref{thm:Poissonweakadmissibility}]
It suffices to show that for any $d\in\mathbb{N}$,
\begin{align}
	\mathcal{R}(\Theta,\hat{\theta}_{\mathrm{J}})
	\geq - d\log\left(1+(d/2-1)\frac{1-\mathrm{e}^{-d}}{d}\right)
	+d/2-1
	\label{eq:risk_Jeffreys}
\end{align}
and
that
for any $d\in\mathbb{N}$,
\begin{align}
	\mathcal{R}(\Theta,\hat{\theta}_{\mathrm{JS}})
	\leq \frac{1}{2}.
	\label{eq:risk_JS}
\end{align}

\vspace{5mm}
The proof of (\ref{eq:risk_Jeffreys}) follows the proof that $\hat{\theta}_{\mathrm{JS}}$ dominates $\hat{\theta}_{\mathrm{J}}$;
see \citet{Komaki(2004)}.
Applying Lemma \ref{upper_maximin} with $\tilde{\delta}=\hat{\theta}_{\mathrm{JS}}$ yields
\begin{align*}
	\mathcal{R}(\Theta,\hat{\theta}_{\mathrm{J}})
	\geq
	\inf_{\theta\in\Theta}[R(\theta,\hat{\theta}_{\mathrm{J}})-R(\theta,\hat{\theta}_{\mathrm{JS}})].
\end{align*}
Since 
\begin{align*}
	R(\theta,\hat{\theta}_{\mathrm{J}})-R(\theta,\hat{\theta}_{\mathrm{JS}})
	=\mathrm{E}_{X\sim P_{\theta}}\sum_{i=1}^{d}\left[\theta_{i}\log\frac{\hat{\theta}_{\mathrm{JS},i}(X) }{\hat{\theta}_{\mathrm{J},i}(X)} 
	+\hat{\theta}_{\mathrm{J},i}(X) -\hat{\theta}_{\mathrm{JS},i}(X) \right],
\end{align*}
we have
\begin{align*}
	\mathcal{R}(\Theta,\hat{\theta}_{\mathrm{J}})
	\geq
	\inf_{\theta\in\Theta}
	\mathrm{E}_{X\sim P_{\theta}}
	\left[
		\sum_{i=1}^{d}\theta_{i}\log\frac{\sum_{j=1}^{d}X_{j}+1}{\sum_{j=1}^{d}X_{j}+d/2}+(d/2-1)
	\right].
\end{align*}
Since the distribution of $Z=\sum_{i=1}^{d}X_{i}$ is $\mathrm{Po}(\mu)$ with $\mu=\sum_{i=1}^{d}\theta_{i}$,
we have
\begin{align*}
\inf_{\theta\in\Theta}
&\mathrm{E}_{X\sim P_{\theta}}
	\left[
		\sum_{i=1}^{d}\theta_{i}\log\frac{\sum_{j=1}^{d}X_{j}+1}{\sum_{j=1}^{d}X_{j}+d/2}+(d/2-1)
	\right]
\nonumber\\
	&=
	\inf_{\mu\in[0,d]}
	\mathrm{E}_{Z\sim\mathrm{Po}(\mu)}
	\left[
		-\mu\log\left(1+\frac{d/2-1}{Z+1} \right)+ (d/2-1)
	\right]
	\nonumber\\
	&\geq
	\inf_{\mu\in[0,d]}
	\left\{-\mu \log\left(1+ \mathrm{E}_{Z\sim \mathrm{Po}(\mu)}\frac{d/2-1}{Z+1}\right)\right\} +(d/2-1)
	\nonumber\\
	&=
	\inf_{\mu\in[0,d]}
	\left\{-\mu \log\left(1+ (d/2-1)\frac{1-\mathrm{e}^{-\mu}}{\mu}\right)\right\} +(d/2-1)
	\nonumber\\
	&= -d\log\left(1+d/2-1\right) + (d/2-1).
\end{align*}
Here the first inequality follows from Jensen's inequality
and the third equality from the identity
\begin{align*}
	\mathrm{E}_{Z\sim\mathrm{Po}(\mu)}\left[1/(Z+1)\right]=\{1-\mathrm{e}^{-\mu}\}/\mu.
\end{align*}

\vspace{5mm}
The proof of (\ref{eq:risk_JS}) immediately follows from Lemma \ref{upper_maximin} with $\Pi=\delta_{0}$,
which completes the proof of Theorem \ref{thm:Poissonweakadmissibility}.
\end{proof}

\section{Gaussian infinite sequence model with Sobolev-type constraint parameter space}
\label{Section:Gaussiansequence}

In this section, we consider estimation of the mean in a Gaussian infinite sequence model with Sobolev-type constraint parameter space.
Let $\mathcal{X}=\mathbb{R}^{\infty}$.
Let $\Theta=\{\theta=(\theta_{1},\theta_{2}\ldots,)\in l^{2}:\sum_{i=1}^{\infty}i^{2\alpha}\theta_{i}^{2}\leq B\}$
with $\alpha>0$ and $B>0$
and
$\mathcal{P}=\{P_{\theta}=\otimes_{i=1}^{\infty}\mathcal{N}(\theta_{i},1/n):\theta\in\Theta\}$.
The hyperparameter $\alpha$ controls the smoothness level of a true function in the nonparametric regression model
and
the hyperparameter $B$ controls the volume of the parameter space.
In this paper, we assume that $\alpha$ is known.
Even in the setting in which $\alpha$ is known,
the results in this section are noble.
Let $\mathcal{A}=\mathbb{R}^{\infty}$ with the corresponding decision space $\mathcal{D}$
and
$L(\theta,a)=\|\theta-a\|^{2}$ with the corresponding risk function $R(\theta,\hat{\theta})$,
where $\|b\|^{2}:=\sum_{i=1}^{\infty}b_{i}^{2}$ for $b\in\mathbb{R}^{\infty}$.
In this section, we consider the asymptotics in which the sample size $n$ grows to $\infty$.

Let $\hat{\theta}_{\mathrm{G}}$ be the Bayes estimator based on the Gaussian prior
$$\mathrm{G}=\otimes_{i=1}^{\infty}\mathcal{N}(0,i^{-2\alpha-1})$$
and
$\hat{\theta}_{\mathrm{S}}$ be the Bayes estimator based on the prior
$$\mathrm{S}=\sum_{d=1}^{\infty}M(d)\left[\left\{\otimes_{i=1}^{d}\mathcal{N}(0,d^{2\alpha+1}i^{-2\alpha-1}/n)\right\}
\otimes\left\{\otimes_{i=d+1}^{\infty}\mathcal{N}(0,0)\right\}\right],$$
where $M(d)=\mathrm{e}^{-ad}/\sum_{i=1}^{\infty}\mathrm{e}^{-ai}$.

\begin{rem}
The prior $\mathrm{S}$ is discussed in \citet{YanoandKomaki(2017)},
and
is a modification of the compound prior in \citet{Zhao(2000)}.
The compound prior $\mathrm{C}$ is given as follows:
$$\mathrm{C}=\sum_{d=1}^{\infty}M(d)\left[\left\{\otimes_{i=1}^{d}\mathcal{N}(0,i^{-2\alpha-1})\right\}
\otimes\left\{\otimes_{i=d+1}^{\infty}\mathcal{N}(0,0)\right\}\right].$$
The modification is necessary to ensure that $\mathcal{R}(\Theta,\hat{\theta}_{\mathrm{S}})$ remains sufficiently small.
Roughly speaking, it does require the prior mass condition under which the prior puts nearly the full mass on $\Theta$
to make $\mathcal{R}(\Theta,\hat{\theta}_{\mathrm{S}})$ sufficiently small;
for further details,
see the proof of Theorem \ref{thm:Sobolevweakadmissibility_S} below.
The mass placed on $\Theta$ by the compound prior $\mathrm{C}$ is strictly less than 1 even as $n \to \infty$,
whereas
that by the prior $\mathrm{S}$ grows to 1 as $n\to \infty$ for a fixed $B>0$;
see Lemma \ref{PriorMeasureonEllipsoid}.
To demonstrate that
the compound prior $\mathrm{C}$ places a mass on $\Theta$ that is strictly less than 1 even as $n \to \infty$,
we have
$\mathrm{C}\left(\Theta\right)\leq \mathrm{Pr}(N^{2}\leq B)<1$,
where
$N$ is a one-dimensional standard normal random variable.
\end{rem}

\subsection{Existing result for the Gaussian sequence model}

The following existing result shows that, from the viewpoint of the minimax rate of convergence,
$\hat{\theta}_{\mathrm{G}}$ and $\hat{\theta}_{\mathrm{S}}$ yield the same performance.
\begin{lem}[Theorem 5.1.~in \citet{Zhao(2000)} and Theorem 2 in \citet{YanoandKomaki(2017)}]
For any $\alpha>0$ and any $B>0$,
we have
\begin{align*}
\inf_{\hat{\theta}\in\mathcal{D}}\sup_{\theta\in\Theta}R(\theta,\hat{\theta})
\asymp
\sup_{\theta\in\Theta}R(\theta,\hat{\theta}_{\mathrm{G}})
\asymp
\sup_{\theta\in\Theta}R(\theta,\hat{\theta}_{\mathrm{S}})
\asymp
\left(1/n\right)^{2\alpha/(2\alpha+1)}
\end{align*}
as $n\to \infty$.
\end{lem}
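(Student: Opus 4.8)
The plan is to establish the single rate $r_n := (1/n)^{2\alpha/(2\alpha+1)}$ as a common order for all four quantities, exploiting the elementary sandwich that the minimax risk is dominated by either Bayes sup-risk. Concretely, I would prove three one-sided statements: (i) $\sup_{\theta\in\Theta}R(\theta,\hat{\theta}_{\mathrm{G}})\lesssim r_n$; (ii) $\sup_{\theta\in\Theta}R(\theta,\hat{\theta}_{\mathrm{S}})\lesssim r_n$; and (iii) $\inf_{\hat{\theta}\in\mathcal{D}}\sup_{\theta\in\Theta}R(\theta,\hat{\theta})\gtrsim r_n$. Since trivially $\inf_{\hat{\theta}}\sup_{\theta}R\le\sup_{\theta}R(\theta,\hat{\theta}_{\mathrm{G}})$ and likewise for $\hat{\theta}_{\mathrm{S}}$, combining (i)--(iii) pins every one of the four quantities to the order $r_n$ and closes the chain of $\asymp$'s.

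For (i) I would use that $\hat{\theta}_{\mathrm{G}}$ acts coordinatewise as the linear shrinkage $\hat{\theta}_{\mathrm{G},i}(X)=\frac{i^{-2\alpha-1}}{i^{-2\alpha-1}+1/n}X_i$, so with $\tau_i^2:=i^{-2\alpha-1}$ and $\sigma^2:=1/n$ its risk splits as a $\theta$-free variance term $\sigma^2\sum_i \tau_i^4/(\tau_i^2+\sigma^2)^2$ plus a bias term $\sigma^4\sum_i\theta_i^2/(\tau_i^2+\sigma^2)^2$. The variance sum is governed by the cutoff $i^\ast\asymp n^{1/(2\alpha+1)}$ at which $\tau_i^2\asymp\sigma^2$, giving a variance of order $\sigma^2\cdot i^\ast=r_n$; the bias term is a linear functional of $(\theta_i^2)_i$ over the ellipsoid $\sum_i i^{2\alpha}\theta_i^2\le B$, hence maximized by placing all mass on the single best coordinate, yielding $B\sup_i \sigma^4/\big(i^{2\alpha}(\tau_i^2+\sigma^2)^2\big)$, whose maximum is again attained near $i^\ast$ and is of order $B\,r_n$. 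This is a routine bias--variance computation once the cutoff scale is identified.

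For (iii) I would bound the minimax risk from below by the minimax risk over a finite-dimensional hyperrectangle $\Theta_0=\prod_{i\le m}[-\gamma_i,\gamma_i]\times\{0\}\subseteq\Theta$. By the product structure of $P_\theta$ and the additivity of squared-error loss, the minimax risk over $\Theta_0$ equals $\sum_{i\le m}\rho(\gamma_i,\sigma^2)$, where $\rho(\gamma,\sigma^2)\gtrsim\min(\gamma^2,\sigma^2)$ is the bounded-normal-mean minimax risk, itself lower-bounded by a two-point or truncated-Gaussian prior via Le Cam's two-point method. Taking $\gamma_i^2\equiv 1/n$ and $m\asymp(Bn)^{1/(2\alpha+1)}$, the constraint $\sum_{i\le m}i^{2\alpha}\gamma_i^2\asymp m^{2\alpha+1}/n\lesssim B$ holds, and the bound becomes $m/n\asymp r_n$. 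Equivalently, one may insert a Gaussian least-favorable prior that concentrates on $\Theta$ and invoke the Bayes-risk lower bound on the minimax risk (as in the Poisson proof above); either route produces the matching rate.

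The main obstacle is (ii), the sup-risk upper bound for the compound-prior Bayes rule $\hat{\theta}_{\mathrm{S}}$, since $\hat{\theta}_{\mathrm{S}}$ is a genuine mixture over the dimension $d\sim M$ rather than a coordinatewise linear map. I would control it by writing $R(\theta,\hat{\theta}_{\mathrm{S}})$ through the posterior-weighted combination of the conditional Bayes rules, and showing that with overwhelming $M$-weight the effective dimension concentrates near $i^\ast$: the geometric tail $M(d)\propto\mathrm{e}^{-ad}$ must be shown to render the contributions of oversized and undersized $d$ negligible, while on the retained block the within-block variances $d^{2\alpha+1}i^{-2\alpha-1}/n$ reproduce the Pinsker-type shrinkage. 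This adaptive-mixture estimate is precisely the content of \citet{Zhao(2000)} and \citet{YanoandKomaki(2017)}, so I would either reproduce their oracle-type inequality for the mixture or invoke those theorems directly; the delicate point is the \emph{uniform} control of the mixing over $\theta\in\Theta$, which is exactly what forces the modification from $\mathrm{C}$ to $\mathrm{S}$ recorded in the remark.
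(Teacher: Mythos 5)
The paper never proves this lemma: it is imported wholesale as an existing result (Theorem 5.1 of Zhao (2000) for $\hat{\theta}_{\mathrm{G}}$ and the minimax rate, Theorem 2 of Yano and Komaki (2017) for $\hat{\theta}_{\mathrm{S}}$), so there is no internal argument to compare yours against. Your outline is nevertheless sound and, in parts (i) and (iii), supplies more than the paper does: the bias--variance split for the coordinatewise linear shrinker $\hat{\theta}_{\mathrm{G}}$ with cutoff $i^{\ast}\asymp n^{1/(2\alpha+1)}$ is the standard calculation and is correct (maximizing the bias functional over the ellipsoid by putting all mass on one coordinate is exactly right), and the hyperrectangle-plus-bounded-normal-mean lower bound with $\gamma_i^2\equiv 1/n$, $m\asymp(Bn)^{1/(2\alpha+1)}$ delivers the matching rate; together with the trivial inequality $\inf_{\hat{\theta}}\sup_{\theta}R\le\sup_{\theta}R(\theta,\hat{\theta}_{\mathrm{G}})$ this closes the chain once (ii) is granted. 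For (ii) you ultimately defer to the cited theorems, which is precisely what the paper itself does, so no gap arises there.

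One correction of attribution: your claim that the uniform-in-$\theta$ control of the mixture ``is exactly what forces the modification from $\mathrm{C}$ to $\mathrm{S}$'' is not what the paper says. Zhao's compound prior $\mathrm{C}$ already yields an asymptotically minimax Bayes estimator (that is Zhao's Theorem 5.1), so the minimax-rate statement you are proving does not require the modification at all. Per the paper's remark, the passage from $\mathrm{C}$ to $\mathrm{S}$ (rescaling the block variances to $d^{2\alpha+1}i^{-2\alpha-1}/n$) is needed for the \emph{$\varepsilon$-admissibility} bound of Theorem \ref{thm:Sobolevweakadmissibility_S}: it makes the prior mass $\mathrm{S}(\Theta)$ tend to $1$, which is what drives the exponential bound on $\mathcal{R}(\Theta,\hat{\theta}_{\mathrm{S}})$, whereas $\mathrm{C}(\Theta)$ stays bounded away from $1$. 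This does not affect the validity of your proof of the present lemma, but the motivation you assign to the modification belongs to a different theorem.
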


\subsection{Main results for the Gaussian sequence model}

The noble results presented in this subsection 
show that, from the viewpoint of $\varepsilon$-admissibility,
$\hat{\theta}_{\mathrm{S}}$ is superior to $\hat{\theta}_{\mathrm{G}}$ 
in the case that $\alpha=1$ and $B=1$ or the case that $B$ is sufficiently small.
Numerical evaluations also show the superiority of $\hat{\theta}_{\mathrm{S}}$ over $\hat{\theta}_{\mathrm{G}}$ 
for any $\alpha>0$ and for any $B>0$.
The results are based on two theorems:
Theorem \ref{thm:Sobolevweakadmissibility_G}
shows that
$\mathcal{R}(\Theta,\hat{\theta}_{\mathrm{G}})$ is 
of the same order as the maximum risk of the estimator,
which indicates that there exists an estimator $\hat{\theta}$ 
such that 
$$R(\theta,\hat{\theta})+\mathrm{O}(n^{-2\alpha/(2\alpha+1)})<R(\theta,\hat{\theta}_{\mathrm{G}})<\mathrm{O}(n^{-2\alpha/(2\alpha+1)})$$
for all $\theta\in\Theta$.
Theorem \ref{thm:Sobolevweakadmissibility_S} shows that
$\mathcal{R}(\Theta,\hat{\theta}_{\mathrm{S}})$ has an exponential decay.

\begin{thm}\label{thm:Sobolevweakadmissibility_G}
There exists a constant $c$ depending only on $B$ and $\alpha$  such that
the inequality
\begin{align*}
	\lim_{n\to\infty}\mathcal{R}(\Theta,\hat{\theta}_{\mathrm{G}})/n^{-2\alpha/(2\alpha+1)}>c
\end{align*}
holds.
For $B=1$ and $\alpha=1$, $c$ is taken to be strictly positive.
For any $\alpha>0$, $c$ is taken to be strictly positive if $B$ is sufficiently small.
\end{thm}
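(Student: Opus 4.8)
The plan is to produce the lower bound by exhibiting a single competitor $\tilde{\delta}$ that beats $\hat{\theta}_{\mathrm{G}}$ uniformly over the ellipsoid $\Theta$ by a margin of order $n^{-2\alpha/(2\alpha+1)}$; directly from the definition of $\mathcal{R}(\Theta,\cdot)$ as a supremum over competitors (the first inequality in Lemma \ref{lower_maximin}), any such $\tilde{\delta}$ gives $\mathcal{R}(\Theta,\hat{\theta}_{\mathrm{G}})\geq\inf_{\theta\in\Theta}[R(\theta,\hat{\theta}_{\mathrm{G}})-R(\theta,\tilde{\delta})]$. First I would record that $\hat{\theta}_{\mathrm{G}}$ is the coordinatewise linear shrinkage $\hat{\theta}_{\mathrm{G},i}(X)=w_iX_i$ with $w_i=n/(n+i^{2\alpha+1})$, so that any linear rule $v_iX_i$ has risk $\sum_i[v_i^2/n+(1-v_i)^2\theta_i^2]$.

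For the competitor I would take the truncation $\tilde{\delta}_i(X)=w_iX_i$ for $i\leq D$ and $\tilde{\delta}_i(X)=0$ for $i>D$, with cutoff $D=\lfloor(\beta n)^{1/(2\alpha+1)}\rfloor$ for a constant $\beta>0$ to be chosen. Because the two estimators agree on the first $D$ coordinates, the risk difference is exactly $R(\theta,\hat{\theta}_{\mathrm{G}})-R(\theta,\tilde{\delta})=\sum_{i>D}[w_i^2/n-w_i(2-w_i)\theta_i^2]$. Minimizing over $\theta\in\Theta$ is then a linear program in the variables $\theta_i^2$ under the constraint $\sum_i i^{2\alpha}\theta_i^2\leq B$, whose optimum places the whole budget $B$ on the single most damaging coordinate, yielding the exact identity
\begin{equation*}
\inf_{\theta\in\Theta}[R(\theta,\hat{\theta}_{\mathrm{G}})-R(\theta,\tilde{\delta})]=\sum_{i>D}\frac{w_i^2}{n}-B\max_{i>D}\frac{w_i(2-w_i)}{i^{2\alpha}}.
\end{equation*}

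Next I would perform the asymptotics under the scaling $i=n^{1/(2\alpha+1)}u$. The variance term is a Riemann sum converging to $n^{-2\alpha/(2\alpha+1)}V(\beta)$ with $V(\beta)=\int_{\beta^{1/(2\alpha+1)}}^{\infty}(1+u^{2\alpha+1})^{-2}\,\mathrm{d}u$, while the maximizing ratio is attained just above $D$ (the numerator $w_i(2-w_i)$ decreases and the denominator $i^{2\alpha}$ increases in $i$); using $w_D=1/(1+\beta)$ this contributes $BW(\beta)n^{-2\alpha/(2\alpha+1)}$ with $W(\beta)=(2\beta+1)/((1+\beta)^2\beta^{2\alpha/(2\alpha+1)})$. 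Hence $\liminf_{n\to\infty}\mathcal{R}(\Theta,\hat{\theta}_{\mathrm{G}})/n^{-2\alpha/(2\alpha+1)}\geq V(\beta)-BW(\beta)=:c$ for every fixed $\beta$, and I may optimize over $\beta$. For $B$ small I would send $\beta\to\infty$: both $V(\beta)$ and $W(\beta)$ decay like $\beta^{-(4\alpha+1)/(2\alpha+1)}$ with leading coefficients $1/(4\alpha+1)$ and $2$, so $c>0$ once $B<1/(2(4\alpha+1))$ and $\beta$ is large. For $B=1,\alpha=1$ the functions reduce to $V(\beta)=\int_{\beta^{1/3}}^{\infty}(1+u^3)^{-2}\,\mathrm{d}u$ and $W(\beta)=(2\beta+1)/((1+\beta)^2\beta^{2/3})$, and it remains to check $V(\beta)>W(\beta)$ at a convenient $\beta$.

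The main obstacle I anticipate is this last quantitative step. The large-$\beta$ regime is soft, but the borderline case $B=1,\alpha=1$ has no slack and forces an explicit (numerical or carefully bounded) evaluation of $V(\beta)-W(\beta)$ at a well-chosen $\beta$. Alongside this I must keep the discrete sum and discrete maximum controlled from the correct side uniformly in $n$, namely a lower bound on the variance sum and an upper bound on the worst-coordinate ratio, so that the Riemann-sum and endpoint approximations yield a genuine lower bound on $\mathcal{R}(\Theta,\hat{\theta}_{\mathrm{G}})$ rather than merely a heuristic one.
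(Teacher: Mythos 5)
Your skeleton is the same as the paper's (apply the first inequality of Lemma \ref{lower_maximin} to one explicit competitor, reduce the worst case over the ellipsoid to a single extremal frequency, pass to the limit by Riemann sums), your exact identity for the truncated competitor and the limit functions $V(\beta)$, $W(\beta)$ are correct, and they do settle the small-$B$ claim (the first claim is vacuous anyway, since $\mathcal{R}(\Theta,\cdot)\geq 0$). The gap is exactly the step you defer: for $B=1$, $\alpha=1$ there is \emph{no} ``convenient $\beta$'' at which $V(\beta)>W(\beta)$; your competitor family cannot prove that part of the theorem. Indeed, for $\alpha=1$ one has
\begin{align*}
V(\beta)=\int_{\beta^{1/3}}^{\infty}\frac{\mathrm{d}u}{(1+u^{3})^{2}}<\int_{\beta^{1/3}}^{\infty}u^{-6}\,\mathrm{d}u=\frac{1}{5}\beta^{-5/3},
\qquad
W(\beta)=\frac{2\beta+1}{(1+\beta)^{2}\beta^{2/3}}>\frac{2\beta^{1/3}}{(1+\beta)^{2}},
\end{align*}
so $V(\beta)/W(\beta)<(1+\beta)^{2}/(10\beta^{2})=\tfrac{1}{10}(1+1/\beta)^{2}\leq 0.9$ for $\beta\geq 1/2$; and for $\beta\leq 1/2$, since $\log W$ has derivative $\tfrac{2}{2\beta+1}-\tfrac{2}{1+\beta}-\tfrac{2}{3\beta}<0$, we get $W(\beta)\geq W(1/2)=\tfrac{8}{9}2^{2/3}\approx 1.41$, while $V(\beta)\leq V(0)=\tfrac{4\pi}{9\sqrt{3}}\approx 0.81$. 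Hence $V(\beta)<W(\beta)$ for \emph{every} $\beta>0$, so $c=V(\beta)-W(\beta)<0$ always; and because your worst-case identity is exact rather than a lossy bound, this is not repairable by sharper estimates. Other cutoff rates fail too: if $D^{3}/n\to 0$ the penalty term swamps $n^{-2/3}$, and if $D^{3}/n\to\infty$ the comparison degenerates to $\tfrac{1}{5}$ versus $2B=2$.

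The failure is structural: zeroing out coordinate $D+1$ pays, against the worst-case $\theta$ concentrated there, a bias penalty that is an order-one multiple of the entire variance saved in the tail. The paper instead takes as competitor $\hat{\theta}_{\mathrm{G}(s)}$, the Bayes estimator under the proportionally rescaled prior $\otimes_{i}\mathcal{N}(0,s\,i^{-2\alpha-1})$ with $s\in(0,1)$: every coordinate is shrunk slightly more, no coordinate is annihilated, and the worst-case bias penalty (computed in Appendix \ref{Appendix:details_firstterm} by minimizing $f(x,1/n)-f(x,1/\{ns\})$ at $x^{*}$) is spread across frequencies. The resulting trade-off is favorable, though only barely: at $s=0.9$, $B=1$, $\alpha=1$ the paper's constant is
\begin{align*}
(0.8)^{4/3}\left[\frac{1}{(1.8)^{2}}-\frac{1}{(1.7)^{2}}\right]+\bigl(1-0.9^{1/3}\bigr)\frac{4\pi}{9\sqrt{3}}\approx 6\times 10^{-5}>0,
\end{align*}
which shows how little slack the borderline case has and why the hard-truncation family, whose penalty-to-gain ratio is bounded away from the favorable side, cannot reach it. To repair your proof, keep your skeleton verbatim but replace the truncated competitor by $\hat{\theta}_{\mathrm{G}(s)}$ (or any smooth extra-shrinkage family) and optimize over $s$.
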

\begin{thm}\label{thm:Sobolevweakadmissibility_S}
We have
\begin{align*}
	\mathcal{R}(\Theta,\hat{\theta}_{\mathrm{S}}) 
	\lesssim
	\exp\left\{-\frac{a}{2}\left(nB\right)^{\frac{1}{4\alpha+2}}\right\}
\end{align*}
as $n\to \infty$.
\end{thm}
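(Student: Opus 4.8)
The plan is to apply Lemma~\ref{upper_maximin} with the prior obtained by conditioning $\mathrm{S}$ on $\Theta$, namely $\Pi(\cdot)=\mathrm{S}(\cdot\cap\Theta)/\mathrm{S}(\Theta)$, and then to show that the resulting upper bound is controlled solely by the mass that $\mathrm{S}$ leaks outside $\Theta$. Since the loss is squared error, the Bayes solution $\hat\theta_{\Pi}$ is the posterior mean under $\Pi$, and the elementary bias--variance identity $\mathrm{E}[\|\theta-\delta(X)\|^{2}\mid X]=\|\delta(X)-\hat\theta_{\Pi}(X)\|^{2}+\mathrm{E}[\|\theta-\hat\theta_{\Pi}(X)\|^{2}\mid X]$ turns the conclusion of Lemma~\ref{upper_maximin} into $\mathcal{R}(\Theta,\hat\theta_{\mathrm{S}})\le\int_{\Theta}[R(\theta,\hat\theta_{\mathrm{S}})-R(\theta,\hat\theta_{\Pi})]\,\mathrm{d}\Pi=\mathrm{E}_{X\sim m_{\Pi}}\|\hat\theta_{\mathrm{S}}(X)-\hat\theta_{\Pi}(X)\|^{2}$, where $m_{\Pi}$ is the law of $X$ under $\Pi$. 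Thus the whole problem reduces to bounding the discrepancy between the full-prior posterior mean $\hat\theta_{\mathrm{S}}$ and the conditioned-prior posterior mean $\hat\theta_{\Pi}$.

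Next I would express this discrepancy through the posterior mass outside $\Theta$. Writing $p(X)=\mathrm{S}(\Theta\mid X)$ and splitting the full posterior mean into its parts on $\Theta$ and on $\Theta^{c}$ gives $\hat\theta_{\mathrm{S}}(X)-\hat\theta_{\Pi}(X)=(p(X)-1)\hat\theta_{\Pi}(X)+\mathrm{E}_{\mathrm{S}}[\theta\,\mathbf{1}_{\Theta^{c}}\mid X]$. Because $i^{2\alpha}\ge1$ forces $\|\theta\|^{2}\le\sum_{i}i^{2\alpha}\theta_{i}^{2}\le B$ on $\Theta$, the convex set $\Theta$ lies in the ball of radius $\sqrt{B}$, so $\hat\theta_{\Pi}(X)\in\Theta$ and $\|\hat\theta_{\Pi}(X)\|\le\sqrt{B}$. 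Using $(a+b)^{2}\le2a^{2}+2b^{2}$, Cauchy--Schwarz on the truncated mean, the change of measure $\mathrm{d}m_{\Pi}/\mathrm{d}m_{\mathrm{S}}=p/\mathrm{S}(\Theta)\le1/\mathrm{S}(\Theta)$, and the identity $\int\mathrm{S}(\Theta^{c}\mid X)\,\mathrm{d}m_{\mathrm{S}}=\mathrm{S}(\Theta^{c})$, I would arrive at $\mathcal{R}(\Theta,\hat\theta_{\mathrm{S}})\lesssim B\,\mathrm{S}(\Theta^{c})+\mathrm{E}_{\mathrm{S}}[\|\theta\|^{2}\mathbf{1}_{\Theta^{c}}]$. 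It then remains to show that both the prior mass and the prior second moment outside $\Theta$ decay like $\exp\{-\tfrac{a}{2}(nB)^{1/(4\alpha+2)}\}$, which is the content of the prior-mass estimate (Lemma~\ref{PriorMeasureonEllipsoid}).

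For that estimate I would exploit the mixture structure $\mathrm{S}=\sum_{d}M(d)\mathrm{S}_{d}$, with $\mathrm{S}_{d}$ the $d$-dimensional product Gaussian. On the support of $\mathrm{S}_{d}$ the crude bound $i^{2\alpha}\le d^{2\alpha}$ yields the clean sufficient condition $\|\theta\|^{2}\le B/d^{2\alpha}\Rightarrow\theta\in\Theta$, whence $\mathrm{S}_{d}(\Theta^{c})\le\mathrm{Pr}(\|\theta\|^{2}>B/d^{2\alpha})$. Since the largest prior variance is $v_{1}=d^{2\alpha+1}/n$, a chi-square large-deviation bound dominated by the first coordinate gives $\mathrm{S}_{d}(\Theta^{c})\lesssim\exp\{-nB/(2d^{4\alpha+1})\}$, and a Cauchy--Schwarz argument produces the same exponential rate for $\mathrm{E}_{\mathrm{S}_{d}}[\|\theta\|^{2}\mathbf{1}_{\Theta^{c}}]$ (the polynomial prefactor being killed by $d^{2\alpha+1}/n\to0$ at the relevant scale). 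I would then split the sum over $d$ at $D=\tfrac12(nB)^{1/(4\alpha+2)}$: for $d>D$ use $\mathrm{S}_{d}(\Theta^{c})\le1$, so that $\sum_{d>D}M(d)\lesssim\mathrm{e}^{-aD}=\exp\{-\tfrac{a}{2}(nB)^{1/(4\alpha+2)}\}$, whereas for $d\le D$ the exponent $nB/(2d^{4\alpha+1})$ is of order $(nB)^{1/(4\alpha+2)}$ with a strictly larger constant, making that part negligible. This balance is precisely what produces the stated rate.

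The hard part is the reduction carried out in the first two paragraphs: one must justify, in the infinite-dimensional Gaussian model, that the gap between the Bayes risk of $\hat\theta_{\mathrm{S}}$ and the minimal Bayes risk of the conditioned prior is controlled by the leakage mass alone, which requires care with the two posterior means, their square-integrability, and the measure change between $m_{\Pi}$ and $m_{\mathrm{S}}$. By comparison the large-deviation step is routine once it is correctly calibrated --- the deliberately loose condition $\|\theta\|^{2}\le B/d^{2\alpha}$ is what converts the intrinsic leakage rate into $(nB)^{1/(4\alpha+2)}$, and the split point $D$ must be chosen so that the geometric tail of $M(d)$ dominates and yields the constant $a/2$.
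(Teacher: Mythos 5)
Your reduction in the first two paragraphs is correct and is a genuinely cleaner route than the paper's. The paper also conditions $\mathrm{S}$ on $\Theta$ and applies Lemma \ref{upper_maximin}, but it compares the two Bayes risks by brute force: it restricts attention to estimators taking values in $\Theta$, uses the crude bound $\|\theta-\delta(X)\|^{2}\le 2(B+\|\theta\|^{2})$, and pays for the leakage with the Cauchy--Schwarz step $\mathrm{E}_{\mathrm{S}}[\|\theta\|^{2}1_{\theta\in\Theta^{\mathrm{c}}}]\le\sqrt{\mathrm{E}_{\mathrm{S}}[\|\theta\|^{4}]}\,\mathrm{S}^{1/2}(\Theta^{\mathrm{c}})$, whose square root is exactly where the constant $a/2$ in the statement comes from. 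Your Pythagorean identity $\int[R(\theta,\hat\theta_{\mathrm{S}})-R(\theta,\hat\theta_{\Pi})]\mathrm{d}\Pi=\mathrm{E}_{X\sim m_{\Pi}}\|\hat\theta_{\mathrm{S}}(X)-\hat\theta_{\Pi}(X)\|^{2}$, the split through $p(X)=\mathrm{S}(\Theta\mid X)$, and the change of measure $\mathrm{d}m_{\Pi}/\mathrm{d}m_{\mathrm{S}}=p/\mathrm{S}(\Theta)$ are all valid (and $\hat\theta_{\Pi}(X)$ does lie in the ball of radius $\sqrt{B}$, by Jensen), so both proofs meet at the same interface: everything reduces to $\mathrm{S}(\Theta^{\mathrm{c}})$ and $\mathrm{E}_{\mathrm{S}}[\|\theta\|^{2}1_{\theta\in\Theta^{\mathrm{c}}}]$.

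The gap is in your leakage estimate, and it is quantitative but fatal as written. On the support of $\mathrm{S}_{d}$ one has exactly $\sum_{i\le d}i^{2\alpha}\theta_{i}^{2}=(d^{2\alpha+1}/n)\sum_{i\le d}i^{-1}N_{i}^{2}$, so keeping the ellipsoid weights paired with the prior variances (as in Lemma \ref{PriorMeasureonEllipsoid}: $i^{2\alpha}\cdot i^{-2\alpha-1}=i^{-1}\le 1$) yields the chi-square threshold $nB/d^{2\alpha+1}$, which for $d\le(nB)^{1/(4\alpha+2)}$ is of order $(nB)^{1/2}$. Your decoupled bound ($i^{2\alpha}\le d^{2\alpha}$, then all variances $\le v_{1}$) loses a factor $d^{2\alpha}$ and produces the threshold $nB/d^{4\alpha+1}$, which at your split point $D=\tfrac12(nB)^{1/(4\alpha+2)}$ gives the exponent $2^{4\alpha}(nB)^{1/(4\alpha+2)}$ --- the \emph{same order} as the target $\tfrac{a}{2}(nB)^{1/(4\alpha+2)}$, not a larger order. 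Hence your claim that the $d\le D$ part is ``negligible'' with ``a strictly larger constant'' holds only when $2^{4\alpha}\ge a/2$, i.e.\ $a\le 2^{4\alpha+1}$; since $a>0$ is an arbitrary parameter of the prior $\mathrm{S}$, the proof fails for large $a$. No re-choice of $D$ rescues it: balancing $aD=nB/(2D^{4\alpha+1})$ gives the exponent $(a^{4\alpha+1}/2)^{1/(4\alpha+2)}(nB)^{1/(4\alpha+2)}$, which is again below $\tfrac{a}{2}(nB)^{1/(4\alpha+2)}$ whenever $a>2^{4\alpha+1}$. The fix is precisely the pairing above: with threshold $nB/d^{2\alpha+1}\sim(nB)^{1/2}$ the chi-square contribution decays like $\mathrm{e}^{-c(nB)^{1/2}}$, which beats $\mathrm{e}^{-a(nB)^{1/(4\alpha+2)}}$ for every fixed $a$, so only the geometric tail $\sum_{d>T}M(d)\lesssim\mathrm{e}^{-aT}$ survives; plugged into your reduction this proves the theorem (indeed with room to spare).
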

Proofs are provided in Subsection \ref{subsec:proof_Gaussian}.

\begin{figure}[htb]
	\begin{minipage}{0.4\hsize}
	\begin{center}
		\includegraphics[width=0.90\hsize]{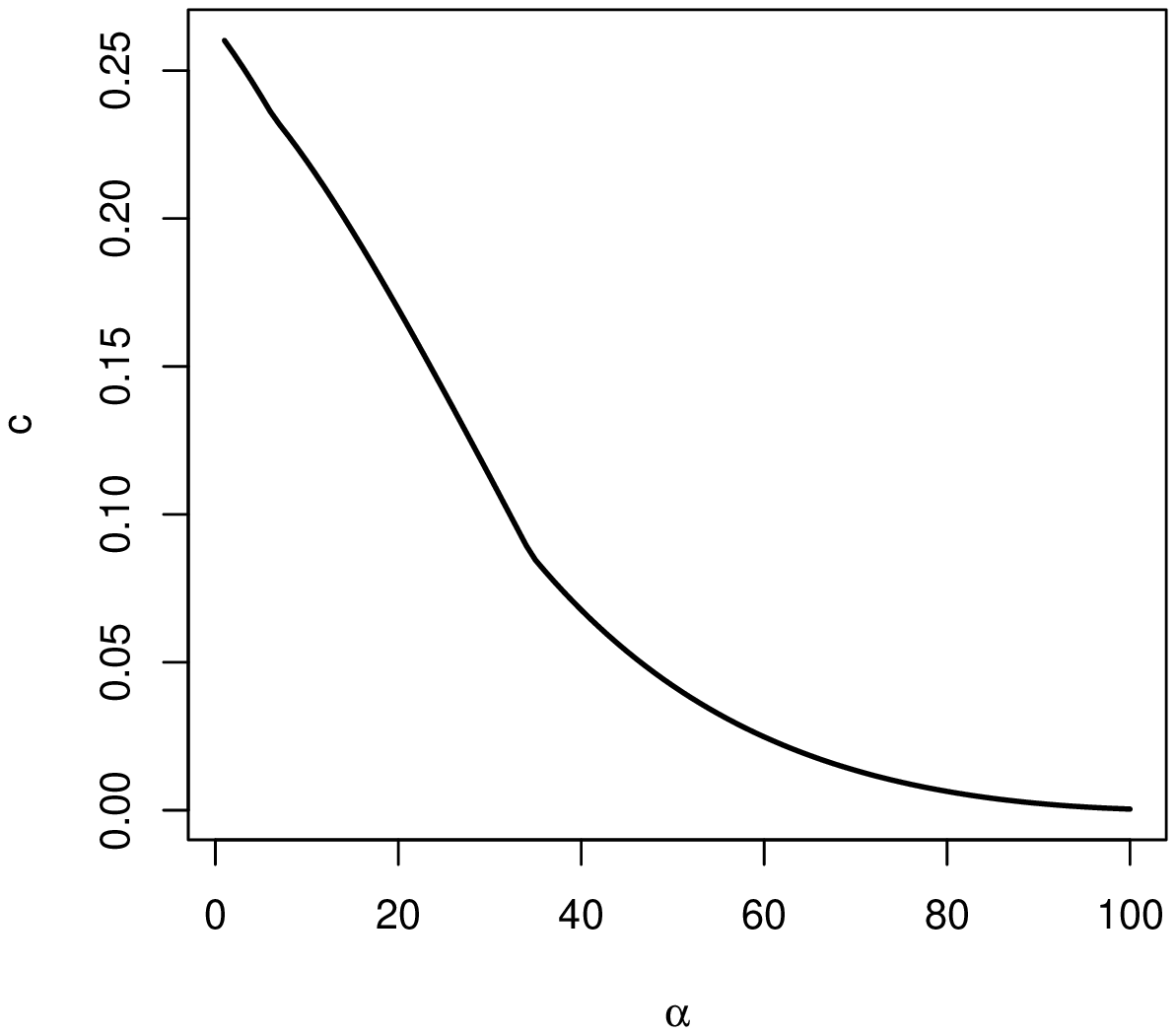}
	\caption{Possible choice of $c$ in Theorem \ref{thm:Sobolevweakadmissibility_G} with $B=1$}
	\label{Fig:alphaval}
	\end{center}
	\end{minipage}
	\begin{minipage}{0.4\hsize}
	\begin{center}
		\includegraphics[width=0.90\hsize]{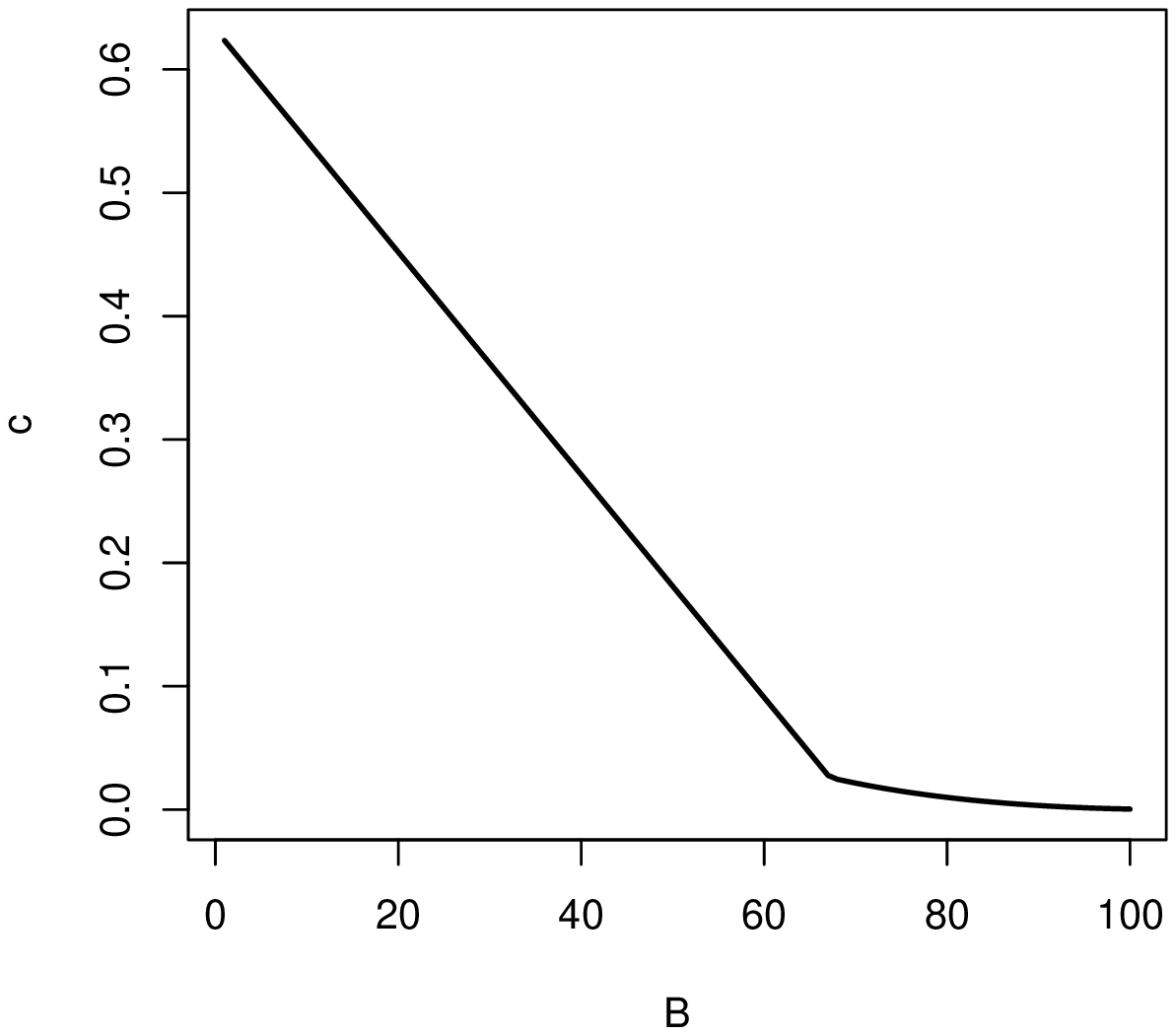}
	\caption{Possible choice of $c$ in Theorem \ref{thm:Sobolevweakadmissibility_G} with $\alpha=1$}
	\label{Fig:Bval}
	\end{center}
	\end{minipage}
\end{figure}

Although we do not provide a proof of the strict positivity of $c$ in Theorem \ref{thm:Sobolevweakadmissibility_G} for general settings,
numerical evaluations (see Figures \ref{Fig:alphaval} and \ref{Fig:Bval})
show that we can assume strict positivity of $c$ in the case that $\alpha=1$ or in the case that $B=1$.
Here, the choice of $c$ in Figures \ref{Fig:alphaval} and \ref{Fig:Bval} is described at the beginning of the proof of Theorem \ref{thm:Sobolevweakadmissibility_G}.

\subsection{Proofs of theorems}\label{subsec:proof_Gaussian}

\begin{proof}[Proof of Theorem \ref{thm:Sobolevweakadmissibility_G}]
Let $\hat{\theta}_{\mathrm{G}(s)}$ be the Bayes estimator based on $\otimes_{i=1}^{\infty}\mathcal{N}(0,si^{-2\alpha-1})$.
For $s\in(0,1)$,
we will show that
\begin{align}
\inf_{\theta\in\Theta}[R&(\theta,\hat{\theta}_{\mathrm{G}})-R(\theta,\hat{\theta}_{\mathrm{G}(s)})]
\nonumber\\
	=&n^{-\frac{2\alpha}{2\alpha+1}}
	\times\left[ B\left(\frac{2\alpha+2}{4\alpha+1}\right)^{\frac{2\alpha+2}{2\alpha+1}}
	\left\{\frac{1}{\{1+\frac{2\alpha+2}{4\alpha+1}\}^{2}}-\frac{s^{-2}}{\{1+\frac{2\alpha+2}{s(4\alpha+1)}\}^{2}}\right\}
	\right.
	\nonumber\\
	&\left.\quad\quad\quad\quad+(1-s^{\frac{1}{2\alpha+1}})\int_{0}^{\infty}\frac{1}{(1+x^{2\alpha+1})^{2}}\mathrm{d}x-n^{-\frac{1}{2\alpha+1}}\right].
\end{align}
Letting $c$ be the supremum of the right hand side in the above inequality,
will completes the proof.
The strict positivity of $c$ for the specific settings is proved in the last step.

\vspace{5mm}
First,
a direct evaluation of the risks yields
\begin{align*}
R&(\theta,\hat{\theta}_{\mathrm{G}})-R(\theta,\hat{\theta}_{\mathrm{G}(s)})
\nonumber\\
=&\sum_{i=1}^{\infty}\theta_{i}^{2}\left\{\left(\frac{i^{2\alpha+1}/n}{1+i^{2\alpha+1}/n}\right)^{2}
-\left(\frac{i^{2\alpha+1}/\{ns\}}{1+i^{2\alpha+1}/\{ns\}}\right)^{2}\right\}
	\nonumber\\
	&+n^{-1}\sum_{i=1}^{\infty}\left\{\left(\frac{1}{1+i^{2\alpha+1}/n}\right)^{2}-\left(\frac{1}{1+i^{2\alpha+1}/\{ns\}}\right)^{2}\right\}
\nonumber\\
=&\left(\sum_{j=1}^{\infty}j^{2\alpha}\theta_{j}^{2}\right)\sum_{i=1}^{\infty}
	\frac{i^{2\alpha}\theta_{i}^{2}}{\sum_{j=1}^{\infty}j^{2\alpha}\theta_{j}^{2}}
	\left\{\left(\frac{i^{2\alpha+1}/n}{1+i^{2\alpha+1}/n}\right)^{2}
	-\left(\frac{i^{2\alpha+1}/\{ns\}}{1+i^{2\alpha+1}/\{ns\}}\right)^{2}\right\}
	\nonumber\\
	&+n^{-1}\sum_{i=1}^{\infty}\left\{\left(\frac{1}{1+i^{2\alpha+1}/n}\right)^{2}-\left(\frac{1}{1+i^{2\alpha+1}/\{ns\}}\right)^{2}\right\}.
\end{align*}
For $\theta=0$,
\begin{align*}
	R(\theta,\hat{\theta}_{\mathrm{G}})-R(\theta,\hat{\theta}_{\mathrm{G}(s)})
	\geq
	n^{-1}\sum_{i=1}^{\infty}\left\{\left(\frac{1}{1+i^{2\alpha+1}/n}\right)^{2}-\left(\frac{1}{1+i^{2\alpha+1}/\{ns\}}\right)^{2}\right\}.
\end{align*}
For $\theta\neq 0$,
a calculation in Appendix \ref{Appendix:details_firstterm} yields
\begin{align*}
R&(\theta,\hat{\theta}_{\mathrm{G}})-R(\theta,\hat{\theta}_{\mathrm{G}(s)})
\nonumber\\
\geq&B\left(\frac{2\alpha+2}{4\alpha+1}\right)^{\frac{2\alpha+2}{2\alpha+1}}n^{-\frac{2\alpha}{2\alpha+1}}
	\left[
		\frac{1}{\{1+\frac{2\alpha+2}{4\alpha+1}\}^{2}}-\frac{s^{-2}}{\{1+\frac{2\alpha+2}{s(4\alpha+1)}\}^{2}}
	\right]
	\nonumber\\
	&+n^{-1}\sum_{i=1}^{\infty}\left\{\left(\frac{1}{1+i^{2\alpha+1}/n}\right)^{2}-\left(\frac{1}{1+i^{2\alpha+1}/\{ns\}}\right)^{2}\right\}.
\end{align*}
Therefore,
we have
\begin{align}
	\inf_{\theta\in\Theta}[R&(\theta,\hat{\theta}_{\mathrm{G}})-R(\theta,\hat{\theta}_{\mathrm{G}(s)})]
\nonumber\\
=&B\left(\frac{2\alpha+2}{4\alpha+1}\right)^{\frac{2\alpha+2}{2\alpha+1}}n^{-\frac{2\alpha}{2\alpha+1}}
	\left[
		\frac{1}{\{1+\frac{2\alpha+2}{4\alpha+1}\}^{2}}-\frac{s^{-2}}{\{1+\frac{2\alpha+2}{s(4\alpha+1)}\}^{2}}
	\right]
	\nonumber\\
	&+n^{-1}\sum_{i=1}^{\infty}\left\{\left(\frac{1}{1+i^{2\alpha+1}/n}\right)^{2}-\left(\frac{1}{1+i^{2\alpha+1}/\{ns\}}\right)^{2}\right\}.
	\label{eq:riskexp}
\end{align}
By convergence of the Riemann sum $\sum_{i=1}^{\infty}\{1+(i/N)^{2\alpha+1}\}^{-2}$ for a positive number $N$,
we have
\begin{align}
	\sum_{i=1}^{\infty}\frac{1}{(1+(i/N)^{2\alpha+1})^{2}}
	\leq
	N\int_{0}^{\infty}\frac{1}{(1+x^{2\alpha+1})^{2}}\mathrm{d}x
	\leq
	\sum_{i=1}^{\infty}\frac{1}{(1+(i/N)^{2\alpha+1})^{2}}+1.
	\label{eq:Riemannsum}
\end{align}
Combining (\ref{eq:Riemannsum}) with (\ref{eq:riskexp}) yields
\begin{align}
\inf_{\theta\in\Theta}[R&(\theta,\hat{\theta}_{\mathrm{G}})-R(\theta,\hat{\theta}_{\mathrm{G}(s)})]
\nonumber\\
\geq&B\left(\frac{2\alpha+2}{4\alpha+1}\right)^{\frac{2\alpha+2}{2\alpha+1}}n^{-\frac{2\alpha}{2\alpha+1}}
	\left[
		\frac{1}{\{1+\frac{2\alpha+2}{4\alpha+1}\}^{2}}-\frac{s^{-2}}{\{1+\frac{2\alpha+2}{s(4\alpha+1)}\}^{2}}
	\right]
	\nonumber\\
&+n^{-\frac{2\alpha}{2\alpha+1}}\{1-s^{\frac{1}{2\alpha+1}}\}\int_{0}^{\infty}\frac{1}{(1+x^{2\alpha+1})^{2}}\mathrm{d}x-n^{-1}
\nonumber\\
	=&n^{-\frac{2\alpha}{2\alpha+1}}
	\times\left[ B\left(\frac{2\alpha+2}{4\alpha+1}\right)^{\frac{2\alpha+2}{2\alpha+1}}
	\left\{\frac{1}{\{1+\frac{2\alpha+2}{4\alpha+1}\}^{2}}-\frac{s^{-2}}{\{1+\frac{2\alpha+2}{s(4\alpha+1)}\}^{2}}\right\}
	\right.
	\nonumber\\
	&\left.\quad\quad\quad+(1-s^{\frac{1}{2\alpha+1}})\int_{0}^{\infty}\frac{1}{(1+x^{2\alpha+1})^{2}}\mathrm{d}x-n^{-\frac{1}{2\alpha+1}}\right].
\end{align}
For $B=1$ and $\alpha=1$, taking $s=0.9$ confirms that the right hand side is positive.
The choice of $s$ follows from the direct evaluation of the integral $\int_{0}^{\infty} (1+x^{2\alpha+1})^{-2} \mathrm{d}x$.
For an arbitrary $s\in(0,1)$ and sufficiently small $B>0$, 
the right hand side is positive.
This completes the proof.

\end{proof}

\vspace{5mm}

\begin{proof}[Proof of Theorem \ref{thm:Sobolevweakadmissibility_S}]
	Let $T=\lfloor (nB)^{1/(4\alpha+2)} \rfloor$.
	Let $\widetilde{\mathrm{S}}$ be the probability distribution obtained by restricting $\mathrm{S}$ to $\Theta$.
	The corresponding Bayes estimator is denoted by $\hat{\theta}_{\widetilde{\mathrm{S}}}$.
	Let $\mathcal{D}^{*}:=\{\delta\in (l_{2})^{\mathbb{R}^{\infty}}:\delta(x)\in\Theta\}$

	From Lemma \ref{upper_maximin}, it suffices to show that
	\begin{align}
		\inf_{\delta}\int R(\theta,\delta)\mathrm{d}\widetilde{\mathrm{S}}(\theta)
		+\mathrm{O}(\exp(-aT))
		\geq
		\int R(\theta,\hat{\theta}_{\mathrm{S}})\mathrm{d}\widetilde{\mathrm{S}}(\theta).
	\label{eq:gammaBayes}
	\end{align}
	Since $\hat{\theta}_{\widetilde{\mathrm{S}}}$ is included in $\mathcal{D}^{*}$,
	\begin{align*}
	\inf_{\delta}\int R(\theta,\delta)\mathrm{d}\widetilde{\mathrm{S}}(\theta)
	=
	\inf_{\delta\in\mathcal{D}}\int R(\theta,\delta)\mathrm{d}\widetilde{\mathrm{S}}(\theta).
	\end{align*}
	Since $1_{\theta\in\Theta}=1_{\theta\in l_{2}}-1_{\theta\in\Theta^{\mathrm{c}}}$,
	we have
	\begin{align*}
		\inf_{\delta\in\mathcal{D}^{*}}&\int R(\theta,\delta)\mathrm{d}\widetilde{\mathrm{S}}(\theta)
	\nonumber\\
		&\geq
	\frac{1}{\mathrm{S}(\Theta)}
	\left[
		\inf_{\delta\in\mathcal{D}^{*}} \int R(\theta,\delta)\mathrm{d}\mathrm{S}(\theta)
		-\sup_{\delta\in\mathcal{D}^{*}} \int_{\theta\in\Theta^{\mathrm{c}}} R(\theta,\delta)\mathrm{d}\mathrm{S}(\theta)
	\right]
\nonumber\\
&\geq
\frac{1}{\mathrm{S}(\Theta)}
\left[
	\inf_{\delta} \int R(\theta,\delta)\mathrm{d}\mathrm{S}(\theta)
	-\sup_{\delta\in\mathcal{D}^{*}} \int_{\theta\in\Theta^{\mathrm{c}}} R(\theta,\delta)\mathrm{d}\mathrm{S}(\theta)
\right].
	\end{align*}
	Since $||\theta-\delta(X)||^{2} \leq 2(B+||\theta||^{2})$ for $\delta\in\mathcal{D}^{*}$
	and
	$\inf_{\delta}\int R(\theta,\delta)\mathrm{d}\mathrm{S}=\int R(\theta,\hat{\theta}_{\mathrm{S}}) \mathrm{d}\mathrm{S}(\theta)$,
	we have
	\begin{align*}
	&\left[
	\inf_{\delta} \int R(\theta,\delta)\mathrm{d}\Lambda(\theta)
	-\sup_{\delta\in\mathcal{D}^{*}} \int_{\theta\in\mathcal{E}^{\mathrm{c}}} R(\theta,\delta)\mathrm{d}\mathrm{S}(\theta)
\right]\nonumber\\
	&\geq
\left[\int 1_{\theta\in\Theta} R(\theta,\hat{\theta}_{\mathrm{S}})\mathrm{d}\mathrm{S}(\theta)
	-(2B\mathrm{S}(\Theta^{\mathrm{c}})+2\sqrt{\mathrm{E}_{\mathrm{S}}[\|\theta\|^{4}]}\mathrm{S}^{1/2}(\Theta^{\mathrm{c}}))
	\right]
	\nonumber\\
	&\geq
\left[\int 1_{\theta\in\Theta} R(\theta,\hat{\theta}_{\mathrm{S}})\mathrm{d}\mathrm{S}(\theta)
	-(2B\mathrm{S}(\Theta^{\mathrm{c}})+2cn^{-1}\mathrm{S}^{1/2}(\Theta^{\mathrm{c}}))
	\right],
	\end{align*}
	where $c:=\sum_{d=1}^{\infty}M(d)d^{2\alpha+1}<\infty$.
	To complete the proof,
	we use the following lemma.
	
	\begin{lem}
	\label{PriorMeasureonEllipsoid}
	There exists a constant $c_{1}$ depending on $a>0$ such that
	for a sufficiently large $n\in\mathbb{N}$, the inequality
	\begin{align*}
	\mathrm{S}\left( (\Theta)^{\mathrm{c}}\right)
	\leq c_{1}\mathrm{e}^{-a \lfloor (nB)^{1/(2\times(2\alpha+1))}\rfloor }
	\end{align*}
	holds.
	\end{lem}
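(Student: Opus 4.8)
The plan is to split the prior mass outside $\Theta$ according to the mixture components of $\mathrm{S}$ and to treat small and large values of $d$ differently. Writing $\mathrm{S}(\Theta^{\mathrm{c}})=\sum_{d=1}^{\infty}M(d)\,p_{d}$ with $p_{d}$ the probability that $\theta\notin\Theta$ under the $d$-th component, I would first note that under this component $\theta_{i}=0$ for $i>d$ and $\theta_{i}\sim\mathcal{N}(0,d^{2\alpha+1}i^{-2\alpha-1}/n)$ for $i\le d$, so that $\sum_{i=1}^{\infty}i^{2\alpha}\theta_{i}^{2}$ has the same law as $W_{d}:=(d^{2\alpha+1}/n)\sum_{i=1}^{d}Z_{i}^{2}/i$ for i.i.d.\ standard normal $Z_{i}$. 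Hence $p_{d}=\mathrm{Pr}(W_{d}>B)$, with $\mathrm{E}[W_{d}]=(d^{2\alpha+1}/n)\sum_{i=1}^{d}1/i$, a quantity increasing in $d$.

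Next I would cut the sum at the threshold $T=\lfloor(nB)^{1/(4\alpha+2)}\rfloor$ that already appears in the proof of Theorem \ref{thm:Sobolevweakadmissibility_S}. For the tail $d>T$ I bound $p_{d}\le1$ and exploit the geometric shape of the weights: since $M(d)=\mathrm{e}^{-ad}/\sum_{i\ge1}\mathrm{e}^{-ai}$, a direct summation gives $\sum_{d>T}M(d)=\mathrm{e}^{-aT}$, which is exactly the target rate. It then remains to show that the head $\sum_{d\le T}M(d)\,\mathrm{Pr}(W_{d}>B)$ is negligible relative to $\mathrm{e}^{-aT}$.

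For the head I would use the key scaling $T^{2\alpha+1}=(nB)^{1/2}$, so that for every $d\le T$ the largest weight $d^{2\alpha+1}/n$ is at most $(B/n)^{1/2}\to0$ and the mean $\mathrm{E}[W_{d}]\le\mathrm{E}[W_{T}]\lesssim B^{1/2}n^{-1/2}\log(nB)\to0$. A Chernoff bound for the weighted chi-square then applies with the single choice $\lambda=\tfrac14(n/B)^{1/2}$, which keeps $2\lambda d^{2\alpha+1}/(ni)\le\tfrac12$ throughout; combined with the elementary inequality $-\tfrac12\log(1-u)\le u$ on $[0,1/2]$ this yields $\log\mathrm{E}[\mathrm{e}^{\lambda W_{d}}]\le2\lambda\,\mathrm{E}[W_{d}]$ and therefore $\mathrm{Pr}(W_{d}>B)\le\exp\{-\lambda(B-2\mathrm{E}[W_{d}])\}\le\exp\{-(nB)^{1/2}/8\}$ uniformly over $d\le T$, once $n$ is large enough that $\mathrm{E}[W_{T}]\le B/4$. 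Summing against $M(d)$ and adding the tail gives $\mathrm{S}(\Theta^{\mathrm{c}})\le\mathrm{e}^{-aT}+\exp\{-(nB)^{1/2}/8\}$.

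Since $1/2>1/(4\alpha+2)$ for every $\alpha>0$, the second term is $\mathrm{o}(\mathrm{e}^{-aT})$, and the stated bound closes with, e.g., $c_{1}=2$ for all sufficiently large $n$. The step that needs genuine care---and the main obstacle---is the uniformity of the concentration estimate over the whole range $d\le T$: I must check that $\mathrm{E}[W_{d}]$ stays well below $B$ (in fact tends to $0$) for every such $d$, which is precisely what the balance $T^{2\alpha+1}=(nB)^{1/2}$ provides, and that the one value of $\lambda$ remains admissible for all these $d$ simultaneously. The rest is a routine geometric-series evaluation and a standard sub-gamma tail estimate.
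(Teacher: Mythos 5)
Your proof is correct and follows essentially the same route as the paper's: the same mixture decomposition $\mathrm{S}(\Theta^{\mathrm{c}})=\sum_{d}M(d)\,\mathrm{Pr}\bigl(\sum_{i\le d}i^{-1}Z_i^2>nB/d^{2\alpha+1}\bigr)$, the same cut at $T=\lfloor(nB)^{1/(4\alpha+2)}\rfloor$, the same geometric-tail evaluation $\sum_{d>T}M(d)=\mathrm{e}^{-aT}$, a uniform $\exp\{-c\,(nB)^{1/2}\}$ bound on the head, and the exponent comparison $1/2>1/(4\alpha+2)$ to close. The only difference is the concentration step: the paper discards the weights (using $i^{-1}|N_i|^2\le|N_i|^2$ and $d^{2\alpha+1}\le T^{2\alpha+1}$) so as to invoke the Laurent--Massart chi-square inequality, whereas you keep the weighted chi-square and run a self-contained Chernoff/MGF bound with a single $\lambda=\tfrac14(n/B)^{1/2}$ --- an equally valid way to obtain the same head estimate.
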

	The proof of Lemma \ref{PriorMeasureonEllipsoid} is given after completing the proof of Theorem \ref{thm:Sobolevweakadmissibility_S}.

	By Lemma \ref{PriorMeasureonEllipsoid},
	for a sufficiently large $n>0$,
	we have
	\begin{align*}
		\inf_{\delta}\int R(\theta,\delta)\mathrm{d}\widetilde{\mathrm{S}}(\theta)
	\geq
	\left[\int R(\theta,\hat{\theta}_{\mathrm{S}})\mathrm{d}\widetilde{\mathrm{S}}(\theta)
		-\mathrm{O}\left(\exp\left\{-\frac{a}{2}T\right\}\right)
	\right],
	\end{align*}
	which demonstrates that inequality (\ref{eq:gammaBayes}) holds.
\end{proof}

\begin{proof}[Proof of Lemma \ref{PriorMeasureonEllipsoid}]
Let $T=\lfloor (nB)^{1/(4\alpha+2)} \rfloor$.
By definition,
\begin{align*}
	\mathrm{S}\left( \Theta^{\mathrm{c}}\right)
	&=
	\sum_{d=1}^{\infty}M(d)\mathrm{Pr}\left(\sum_{i=1}^{d}i^{-1}|N_{i}|^{2} > \frac{nB}{d^{2\alpha+1}}\right)
	\nonumber\\
	&\leq
	\sum_{d=1}^{T}M(d)\mathrm{Pr}\left(\sum_{i=1}^{d}i^{-1}|N_{i}|^{2} > \frac{nB}{d^{2\alpha+1}}\right)
	+\sum_{d=T+1}^{\infty}M(d)
	\nonumber\\
	&=
	\sum_{d=1}^{T}M(d)\mathrm{Pr}\left(\sum_{i=1}^{d}|N_{i}|^{2} > \frac{nB}{T^{2\alpha+1}}\right)
	+\sum_{d=T+1}^{\infty}M(d),
\end{align*}
where $\{N_{i}\}_{i=1}^{T}$ are independent random variables distributed according to $\mathcal{N}(0,1)$.

We next apply an exponential inequality for the chi-square statistics (Lemma 1 in \citet{LaurentandMassart(2000)}):
for any $x>d$,
\begin{align*}
	\mathrm{Pr}\left(\sum_{i=1}^{d}|N_{i}|^{2}\geq x^{2} \right)\leq \mathrm{e}^{-\frac{(\sqrt{x}-\sqrt{d})^{2}}{2}}.
\end{align*}
Setting $x=(1/2)(nB)^{1/2}$,
we have
\begin{align*}
	\mathrm{S}\left( \Theta^{\mathrm{c}} \right)
	\leq
	\sum_{d=1}^{T}M(d)\mathrm{e}^{-\frac{1}{8}(nB)^{1/2}(1+\mathrm{o}(nB)) }
	+\sum_{d=T+1}^{\infty}M(d).
\end{align*}
Setting $nB$ such that the $\mathrm{o}(nB)$ term in the above inequality is less than $1/2$ 
completes the proof.
\end{proof}

\section{Discussion and conclusions}\label{Section:Discussions}

In this paper,
we have demonstrated the usefulness of $\varepsilon$-admissibility in high-dimensional and nonparametric statistical models
by presenting two new results.
These results suggest the use of $\varepsilon$-admissibility in conjunction with the other criteria such as the minimax rate of convergence.

\appendix
\section{Poisson sequence model and inhomogeneous Poisson point process model}\label{Appendix:Poissonpointprocess}

In this appendix,
we discuss the relationship between Poisson sequence models and inhomogeneous Poisson point process models.
Let $\{N_{t}:t\in[0,1]\}$ be an inhomogeneous Poisson point process with an intensity function $\lambda(\cdot):[0,1]\to\mathbb{R}_{+}$.
We assume that the $L^{1}$-norm of $\lambda$ is bounded by $d$.
Let $h:=\lfloor1/d\rfloor$ be a time resolution.
From the independent incremental property of an inhomogeneous Poisson point process,
$d$ random variables $N_{h},N_{2h}-N_{h},\cdots$, $N_{1}-N_{h\times d}$ 
are independently distributed according to Poisson distributions
with means $\int_{0}^{h}\lambda(t)\mathrm{d}t$, $\int_{h}^{2h}\lambda(t)\mathrm{d}t$,$\ldots$,$\int_{h\times d}^{1}\lambda(t)\mathrm{d}t$,
respectively.
Thus 
letting $X_{1}:=N_{h}$,$X_{2}:=N_{2h}-N_{h}$,$\cdots$, $X_{d}:=N_{1}-N_{h\times d}$
and
letting $\theta_{1}=\int_{0}^{h}\lambda(t)\mathrm{d}t$, $\theta_{2}=\int_{h}^{2h}\lambda(t)\mathrm{d}t$,
$\ldots$,
$\theta_{d}=\int_{h\times d}^{1}\lambda(t)\mathrm{d}t$
yields the Poisson sequence model presented in Section \ref{Section:PoissonL1}.

\section{Gaussian sequence model with an $\mathcal{L}^{2}$-constraint parameter space}
\label{Appendix:Additionalexample}

In this appendix,
we consider estimation in a $d$-dimensional Gaussian sequence model with $\mathcal{L}^{2}$-constraint parameter space.
Let $\mathcal{X}=\mathbb{R}^{d}$, $\Theta=\{ \theta=(\theta_{1}, \ldots, \theta_{d}): \sum_{i=1}^{d}\theta_{i}^{2} / d  \leq 1 \}$,
$\mathcal{P} = \{P_{\theta}=\otimes_{i=1}^{d} \mathcal{N}(\theta_{i} , 1) \}$,
$\mathcal{A} = \mathbb{R}^{d}$,
and
$L(\theta , a ) = \|\theta - a\|^{2}$,
where $\|\cdot\|$ is the $l_{2}$-norm in $\mathbb{R}^{d}$.
We compare the following two estimators:
one is the maximum likelihood estimator $\hat{\theta}_{\mathrm{MLE}} (X) := X$;
the other is the James--Stein estimator $\hat{\theta}_{\mathrm{JS}} (X) := (1- (d-1) / \|X\|^{2}) X$.

First,
the following lemma shows that the rate of convergence of $\hat{\theta}_{\mathrm{MLE}}$ is equal to that of a minimax risk,
which indicates that from the viewpoint of the rate of convergence we can not determine whether $\hat{\theta}_{\mathrm{JS}}$ is superior to $\hat{\theta}_{\mathrm{MLE}}$.
\begin{lem}[Theorems 7.28 and 7.48 in \citet{Wasserman_Book}]
	\label{minimaxrate_GS_L2}
	We have
	\begin{align}
		\mathop{\lim}_{d\to\infty}\frac{\inf_{\hat{\theta}\in\mathcal{D}}\sup_{\theta\in\Theta}R(\theta,\hat{\theta})}{d}
		=\frac{1}{2}.
		\label{eq_minimax_GS_L2}
	\end{align}
	We also have
	\begin{align}
		\sup_{\theta\in\Theta}R(\theta,\hat{\theta}_{\mathrm{MLE}})
		\asymp
		\sup_{\theta\in\Theta}R(\theta,\hat{\theta}_{\mathrm{JS}})
		\asymp d .
	\end{align}
\end{lem}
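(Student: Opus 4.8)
The plan is to treat the two displays separately: the statement about the maximal risks of the two estimators follows from direct risk computations, while the exact minimax constant in (\ref{eq_minimax_GS_L2}) is obtained by matching a linear-shrinkage upper bound with a Gaussian-prior lower bound. For the maximal-risk statement, first note that $R(\theta,\hat{\theta}_{\mathrm{MLE}})=\mathrm{E}_{X\sim P_{\theta}}\|X-\theta\|^{2}=d$ for every $\theta$, so $\sup_{\theta\in\Theta}R(\theta,\hat{\theta}_{\mathrm{MLE}})=d$ exactly. For the James--Stein estimator I would apply Stein's unbiased risk estimate to $\hat{\theta}_{\mathrm{JS}}(X)=X-(d-1)X/\|X\|^{2}$, which gives
$$R(\theta,\hat{\theta}_{\mathrm{JS}})=d-(d-1)(d-3)\,\mathrm{E}_{X\sim P_{\theta}}[1/\|X\|^{2}].$$
Since $(d-1)(d-3)\geq0$ for $d\geq3$ this yields $R(\theta,\hat{\theta}_{\mathrm{JS}})\leq d$, hence $\sup_{\theta\in\Theta}R(\theta,\hat{\theta}_{\mathrm{JS}})\lesssim d$. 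For the matching lower bound I would evaluate the risk at a single boundary point $\theta^{*}$ with $\|\theta^{*}\|^{2}=d$: there $\|X\|^{2}$ is noncentral chi-square with $d$ degrees of freedom and noncentrality $d$, so $\mathrm{E}_{\theta^{*}}[\|X\|^{2}]=2d$ and, by concentration of $\|X\|^{2}$ about its mean, $\mathrm{E}_{\theta^{*}}[1/\|X\|^{2}]=\{1+\mathrm{o}(1)\}/(2d)$. Consequently $R(\theta^{*},\hat{\theta}_{\mathrm{JS}})=d/2+\mathrm{o}(d)$, so $\sup_{\theta\in\Theta}R(\theta,\hat{\theta}_{\mathrm{JS}})\gtrsim d$, and the two bounds combine to $\sup_{\theta\in\Theta}R(\theta,\hat{\theta}_{\mathrm{JS}})\asymp d$.

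For the minimax constant, the upper bound $\limsup_{d}d^{-1}\inf_{\hat{\theta}}\sup_{\Theta}R\leq 1/2$ follows by exhibiting the linear estimator $\hat{\theta}(X)=X/2$, whose risk is $R(\theta,X/2)=d/4+\|\theta\|^{2}/4\leq d/2$ on $\Theta$. For the lower bound I would invoke the principle that the minimax risk dominates the Bayes risk of any prior supported on $\Theta$ (cf. the bound (\ref{eq:Bayesriskbound})), applied to the Gaussian prior $\Pi_{\tau}=\otimes_{i=1}^{d}\mathcal{N}(0,\tau^{2})$ with $\tau^{2}<1$ fixed, whose unconstrained Bayes risk equals $d\tau^{2}/(1+\tau^{2})$. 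Passing to the truncation $\Pi_{\tau}(\cdot\mid\Theta)$ and showing its Bayes risk is $d\tau^{2}/(1+\tau^{2})-\mathrm{o}(d)$, then letting $\tau^{2}\uparrow 1$, yields $\liminf_{d}d^{-1}\inf_{\hat{\theta}}\sup_{\Theta}R\geq 1/2$, matching the upper bound.

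The main obstacle is controlling the effect of the truncation in the lower bound. Since $\tau^{2}<1$, the quantity $\|\theta\|^{2}$ has law $\tau^{2}\chi^{2}_{d}$ and concentrates at $d\tau^{2}<d$, so $\eta_{d}:=\Pi_{\tau}(\Theta^{\mathrm{c}})$ decays exponentially in $d$. I would restrict attention to estimators taking values in the convex ball $\Theta$, since projection onto $\Theta$ does not increase the risk at any $\theta\in\Theta$; for such estimators $R(\theta,\hat{\theta})\leq 2d+2\|\theta\|^{2}$ uniformly. Writing $\int_{\Theta}R\,\mathrm{d}\Pi_{\tau}=\int R\,\mathrm{d}\Pi_{\tau}-\int_{\Theta^{\mathrm{c}}}R\,\mathrm{d}\Pi_{\tau}$, the first term is at least the unconstrained Bayes risk $d\tau^{2}/(1+\tau^{2})$, while the correction is bounded by Cauchy--Schwarz as $2d\eta_{d}+2\{\mathrm{E}_{\Pi_{\tau}}\|\theta\|^{4}\}^{1/2}\eta_{d}^{1/2}=\mathrm{o}(1)$, using $\mathrm{E}_{\Pi_{\tau}}\|\theta\|^{4}=\mathrm{O}(d^{2})$ together with the exponential smallness of $\eta_{d}$. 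Dividing by $\Pi_{\tau}(\Theta)\to1$ then delivers the restricted Bayes-risk lower bound, and combining with the upper bound completes the proof of (\ref{eq_minimax_GS_L2}).
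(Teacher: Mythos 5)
Your proof is correct, but it is necessarily a different route from the paper's, because the paper offers no proof at all: it imports this lemma wholesale from Wasserman (Theorems 7.28 and 7.48), which package Pinsker-type linear-minimax results and the James--Stein risk bound. What you have done is reconstruct, self-contained, exactly the standard machinery behind those citations: Stein's unbiased risk estimate gives $R(\theta,\hat{\theta}_{\mathrm{JS}})=d-(d-1)(d-3)\,\mathrm{E}[1/\|X\|^{2}]$ (your algebra for the paper's $(d-1)$-calibrated estimator checks out), the optimal linear shrinker $X/2$ gives the minimax upper bound $d/2$, and a truncated Gaussian prior gives the matching lower bound via (\ref{eq:Bayesriskbound}). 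It is worth noting that your truncation argument --- restrict to $\Theta$-valued estimators by convex projection, split $\int_{\Theta}=\int-\int_{\Theta^{\mathrm{c}}}$, and kill the correction with Cauchy--Schwarz plus the exponentially small prior mass $\Pi_{\tau}(\Theta^{\mathrm{c}})$ --- is precisely the technique the paper itself deploys in the proof of Theorem \ref{thm:Sobolevweakadmissibility_S}, so your argument also makes the appendix more consistent in style with the body of the paper.

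Two small points would need to be filled in for full rigor. First, the step $\mathrm{E}_{\theta^{*}}[1/\|X\|^{2}]=\{1+\mathrm{o}(1)\}/(2d)$: Jensen gives the lower bound $1/(2d)$, but the upper bound requires handling the event where $\|X\|^{2}$ is small, e.g.\ by Cauchy--Schwarz on $\{\|X\|^{2}<(2-\epsilon)d\}$ together with $\mathrm{E}[1/\|X\|^{4}]\leq\{(d-2)(d-4)\}^{-1}$ (stochastic domination of the noncentral by the central chi-square), so the claim holds for $d\geq 5$, which is harmless asymptotically. Second, that whole noncentral computation is avoidable: once (\ref{eq_minimax_GS_L2}) is established, the lower bound $\sup_{\theta\in\Theta}R(\theta,\hat{\theta}_{\mathrm{JS}})\geq\inf_{\hat{\theta}}\sup_{\theta\in\Theta}R(\theta,\hat{\theta})=\{1+\mathrm{o}(1)\}d/2\gtrsim d$ is immediate for \emph{any} estimator, so only your SURE upper bound $R(\theta,\hat{\theta}_{\mathrm{JS}})\leq d$ is actually needed for the second display.
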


Next,
the following theorem tells that from the viewpoint of weak admissibility,
$\hat{\theta}_{\mathrm{JS}}$ is better than $\hat{\theta}_{\mathrm{MLE}}$ as $d$ grows to infinity.
\begin{thm}\label{thm:Gaussianweakadmissibility}
	We have
	\begin{align*}
	 	\mathcal{R}(\Theta,\hat{\theta}_{\mathrm{JS}}) \lesssim 1 \lesssim d \lesssim \mathcal{R}(\Theta,\hat{\theta}_{\mathrm{MLE}}).
	\end{align*}
\end{thm}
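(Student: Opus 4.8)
The statement asserts two bounds: $\mathcal{R}(\Theta,\hat{\theta}_{\mathrm{JS}}) \lesssim 1$ and $\mathcal{R}(\Theta,\hat{\theta}_{\mathrm{MLE}}) \gtrsim d$. These mirror exactly the structure of Theorem \ref{thm:Poissonweakadmissibility}, so my plan is to reuse the two-sided strategy developed there: an upper bound via Lemma \ref{upper_maximin} (Bayes-risk comparison at a point mass) and a lower bound via Lemma \ref{lower_maximin} (the dominating estimator is $\hat{\theta}_{\mathrm{JS}}$ itself).

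For the lower bound $\mathcal{R}(\Theta,\hat{\theta}_{\mathrm{MLE}}) \gtrsim d$, I would invoke Lemma \ref{lower_maximin} with the dominating estimator $\tilde{\delta}=\hat{\theta}_{\mathrm{JS}}$, giving
\begin{align*}
\mathcal{R}(\Theta,\hat{\theta}_{\mathrm{MLE}}) \geq \inf_{\theta\in\Theta}\left[R(\theta,\hat{\theta}_{\mathrm{MLE}})-R(\theta,\hat{\theta}_{\mathrm{JS}})\right].
\end{align*}
The classical James--Stein identity gives $R(\theta,\hat{\theta}_{\mathrm{MLE}})=d$ and $R(\theta,\hat{\theta}_{\mathrm{JS}})=d-(d-1)^{2}\,\mathrm{E}_{\theta}[1/\|X\|^{2}]$, so the risk difference equals $(d-1)^{2}\,\mathrm{E}_{\theta}[1/\|X\|^{2}]$. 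Since $\|X\|^{2}$ is noncentral chi-square with $d$ degrees of freedom and noncentrality $\|\theta\|^{2}$, and $\|\theta\|^{2}\leq d$ on $\Theta$, I would bound $\mathrm{E}_{\theta}[1/\|X\|^{2}]$ from below uniformly over $\Theta$. Because the expectation is decreasing in the noncentrality, its infimum over $\Theta$ is attained in the limit $\|\theta\|^{2}=d$, where $\mathrm{E}_{\theta}[1/\|X\|^{2}]$ is of order $1/(2d)$. This yields $\inf_{\theta\in\Theta}(d-1)^{2}\,\mathrm{E}_{\theta}[1/\|X\|^{2}] \gtrsim d$, which is the desired bound.

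For the upper bound $\mathcal{R}(\Theta,\hat{\theta}_{\mathrm{JS}}) \lesssim 1$, I would apply Lemma \ref{upper_maximin} with the point-mass prior $\Pi=\delta_{0}$, exactly as in the proof of (\ref{eq:risk_JS}). Then $\delta_{\Pi}$ is the constant estimator $\hat{\theta}\equiv 0$, with $R(0,\delta_{\Pi})=0$, so the bound reduces to
\begin{align*}
\mathcal{R}(\Theta,\hat{\theta}_{\mathrm{JS}}) \leq R(0,\hat{\theta}_{\mathrm{JS}}).
\end{align*}
It remains to show $R(0,\hat{\theta}_{\mathrm{JS}})$ is bounded as $d\to\infty$. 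At $\theta=0$, $\|X\|^{2}$ is central chi-square with $d$ degrees of freedom, and the shrinkage risk at the origin is a standard computation giving $R(0,\hat{\theta}_{\mathrm{JS}})=d-(d-1)^{2}\,\mathrm{E}_{0}[1/\|X\|^{2}]=d-(d-1)^{2}/(d-2)$, which converges to $1$ as $d\to\infty$; hence it is bounded, establishing $\mathcal{R}(\Theta,\hat{\theta}_{\mathrm{JS}})\lesssim 1$.

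The main obstacle is the uniform-over-$\Theta$ lower bound on the noncentral-chi-square expectation $\mathrm{E}_{\theta}[1/\|X\|^{2}]$ in the lower-bound argument. Verifying that the infimum is genuinely attained at the boundary $\|\theta\|^{2}=d$ requires the monotonicity of this expectation in the noncentrality parameter and a correct asymptotic evaluation of order $1/(2d)$ there; this is the only step that is not purely formal, and care is needed to confirm the surviving constant is strictly positive so that the $\gtrsim d$ conclusion holds rather than merely $\gtrsim 1$.
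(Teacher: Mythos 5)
Your proposal follows the same skeleton as the paper's proof: the lower bound for $\hat{\theta}_{\mathrm{MLE}}$ via Lemma \ref{lower_maximin} with $\tilde{\delta}=\hat{\theta}_{\mathrm{JS}}$ and the Stein/James--Stein risk identity, and the upper bound for $\hat{\theta}_{\mathrm{JS}}$ via Lemma \ref{upper_maximin} with $\Pi=\delta_{0}$. The one place you diverge is the step you yourself flag as delicate: bounding $\inf_{\theta\in\Theta}\mathrm{E}_{\theta}[1/\|X\|^{2}]$ from below. You propose monotonicity of this expectation in the noncentrality parameter plus an asymptotic evaluation at the boundary $\|\theta\|^{2}=d$; the paper instead applies Jensen's inequality directly, $\mathrm{E}_{\theta}[1/\|X\|^{2}]\geq 1/\mathrm{E}_{\theta}\|X\|^{2}=1/(d+\|\theta\|^{2})\geq 1/(2d)$ uniformly on $\Theta$, which makes the step one line and removes any need for noncentral chi-square theory. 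Your route is valid (the noncentral $\chi^{2}_{d}(\lambda)$ family is stochastically increasing in $\lambda$, so the claim holds), but the cleanest way to make your boundary evaluation rigorous is precisely the Jensen bound, so you may as well use it for all $\theta$ at once. Separately, fix a constant slip: for the shrinkage factor $(d-1)/\|X\|^{2}$ used in the paper's definition, the risk identity is $R(\theta,\hat{\theta}_{\mathrm{JS}})=d-(d-1)(d-3)\,\mathrm{E}_{\theta}[1/\|X\|^{2}]$, not $d-(d-1)^{2}\,\mathrm{E}_{\theta}[1/\|X\|^{2}]$; your formula gives $R(0,\hat{\theta}_{\mathrm{JS}})=d-(d-1)^{2}/(d-2)=-1/(d-2)<0$, which is impossible and signals the error (the correct value is $2+1/(d-2)\to 2$, still bounded). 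Neither slip affects the asymptotic orders $\lesssim 1$ and $\gtrsim d$, so the conclusion stands.
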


Remark that,
unlike the examples in Sections \ref{Section:PoissonL1} and \ref{Section:Gaussiansequence},
we also use multiplicative constant terms 
$\lim \sup_{\theta\in\Theta} R(\theta,\hat{\theta}_{\mathrm{JS}}) / d$
and 
$\lim \sup_{\theta\in\Theta} R(\theta,\hat{\theta}_{\mathrm{MLE}}) /d$
to compare these estimators:
the former is 1/2, whereas the latter is 1.

\vspace{5mm}
The proof of this theorem is a simple combination of the following lemmas.
\begin{lem}
	For any $d(\in\mathbb{N})>2$, we have
\begin{align*}
	\mathcal{R}(\Theta,\hat{\theta}_{\mathrm{MLE}})\geq \frac{(d-2)^{2}}{2d}.
\end{align*}
\end{lem}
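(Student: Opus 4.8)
The plan is to lower-bound $\mathcal{R}(\Theta,\hat{\theta}_{\mathrm{MLE}})$ by exhibiting a single competing estimator and estimating the resulting risk gap. By the first inequality in Lemma \ref{lower_maximin} (which holds for any competitor, domination being needed only for the final nonnegativity), it suffices to produce one $\tilde{\delta}\in\mathcal{D}$ and bound $\inf_{\theta\in\Theta}[R(\theta,\hat{\theta}_{\mathrm{MLE}})-R(\theta,\tilde{\delta})]$ from below. I would take $\tilde{\delta}$ to be the classical James--Stein estimator $\hat{\theta}_{c}(X)=(1-c/\|X\|^{2})X$, which lies in $\mathcal{D}$, and optimize the shrinkage constant $c$ at the end. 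This is the key choice, since the shrinkage factor $d-1$ appearing in the appendix's $\hat{\theta}_{\mathrm{JS}}$ is not the one that yields the cleanest gap.

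First I would compute the risk difference exactly. Writing $\hat{\theta}_{c}(X)-\theta=(X-\theta)-c X/\|X\|^{2}$ and expanding the squared norm gives
\begin{align*}
\|\hat{\theta}_{c}(X)-\theta\|^{2}=\|X-\theta\|^{2}-\frac{2c}{\|X\|^{2}}\langle X,X-\theta\rangle+\frac{c^{2}}{\|X\|^{2}}.
\end{align*}
Applying Stein's identity $\mathrm{E}[(X_{i}-\theta_{i})g(X)]=\mathrm{E}[\partial_{i}g(X)]$ to $g(X)=X_{i}/\|X\|^{2}$ yields $\mathrm{E}[\langle X,X-\theta\rangle/\|X\|^{2}]=(d-2)\mathrm{E}[1/\|X\|^{2}]$, so that, using $R(\theta,\hat{\theta}_{\mathrm{MLE}})=d$,
\begin{align*}
R(\theta,\hat{\theta}_{\mathrm{MLE}})-R(\theta,\hat{\theta}_{c})=\{2c(d-2)-c^{2}\}\,\mathrm{E}_{X\sim P_{\theta}}[1/\|X\|^{2}].
\end{align*}
The bracket $2c(d-2)-c^{2}$ is maximized at $c=d-2$ with value $(d-2)^{2}$, which fixes the competitor as the standard James--Stein estimator $\hat{\theta}_{d-2}$.

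Next I would bound the remaining expectation from below uniformly over $\Theta$. Since $\|X\|^{2}$ has finite negative first moment for $d>2$, Jensen's inequality applied to $t\mapsto 1/t$ gives $\mathrm{E}[1/\|X\|^{2}]\geq 1/\mathrm{E}[\|X\|^{2}]=1/(d+\|\theta\|^{2})$. On $\Theta$ we have $\|\theta\|^{2}\leq d$, so the infimum of this lower bound over $\theta\in\Theta$ is attained on the boundary $\|\theta\|^{2}=d$ and equals $1/(2d)$. Combining the displays,
\begin{align*}
\mathcal{R}(\Theta,\hat{\theta}_{\mathrm{MLE}})\geq\inf_{\theta\in\Theta}(d-2)^{2}\,\mathrm{E}_{X\sim P_{\theta}}[1/\|X\|^{2}]\geq\frac{(d-2)^{2}}{2d}.
\end{align*}

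The individual steps are standard, so the main point is organizational rather than a deep obstacle: one must recognize that the supremum defining $\mathcal{R}$ lets us replace the appendix's $\hat{\theta}_{\mathrm{JS}}$ by the optimally tuned $\hat{\theta}_{d-2}$, and verify the integrability needed for Stein's identity and for the finiteness of $\mathrm{E}[1/\|X\|^{2}]$, which is exactly where the hypothesis $d>2$ enters. The only quantitative subtlety is that the clean constant $(d-2)^{2}/(2d)$ arises from pairing the optimal shrinkage with the crude but sufficiently tight Jensen bound $1/(d+\|\theta\|^{2})$ evaluated at the boundary of $\Theta$.
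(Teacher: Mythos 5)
Your proposal is correct and follows essentially the same route as the paper: a lower bound via the first inequality of Lemma \ref{lower_maximin} with a James--Stein competitor, an exact risk-difference computation via Stein's identity, Jensen's inequality $\mathrm{E}[1/\|X\|^{2}]\geq 1/(d+\|\theta\|^{2})$, and evaluation at the boundary $\|\theta\|^{2}=d$ of $\Theta$. Your optimization over the shrinkage constant $c$ lands exactly on $c=d-2$, which is the constant the paper's own computation uses (its displayed proof works with $(d-2)X/\|X\|^{2}$ even though the appendix states the estimator with $d-1$), so the two arguments coincide.
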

\begin{proof}
	The proof follows the line of the well-known proof that the James--Stein estimator dominates the maximum likelihood estimator.
	Applying Lemma \ref{lower_maximin} with $\tilde{\delta}=\hat{\theta}_{\mathrm{JS}}$
	yields
	\begin{align}
		\mathcal{R}(\Theta,\hat{\theta}_{\mathrm{MLE}})
		&\geq \inf_{\theta\in\Theta}[R(\theta,\hat{\theta}_{\mathrm{MLE}})-R(\theta,\hat{\theta}_{\mathrm{JS}})]
		\nonumber\\
		&=\inf_{\theta\in\Theta}\mathrm{E}_{X|\theta}
		\left[\|\theta-X\|^{2}-\|\theta-X+(d-2)X/\|X\|\|^{2}\right]
		\nonumber\\
		&=\inf_{\theta\in\Theta}\mathrm{E}_{X|\theta}
		\left[-\frac{(d-2)^{2}}{\|X\|^{2}}+2\left\langle X-\theta,\frac{(d-2)X}{\|X\|^{2}} \right\rangle \right]
		\nonumber\\
		&=\inf_{\theta\in\Theta}\mathrm{E}_{X|\theta}
		\left[-\frac{(d-2)^{2}}{\|X\|^{2}}+2(d-2) \sum_{i=1}^{d}\frac{\|X\|^{2}-2X^{2}_{i}}{\|X\|^{4}}\right]
		\nonumber\\
		&=(d-2)^{2} \inf_{\theta\in\Theta}\mathrm{E}_{X|\theta}
		\left[\frac{1}{\|X\|^{2}}\right]
		\nonumber\\
		&\geq \inf_{\theta\in\Theta}\left[\frac{(d-2)^{2}}{d+\|\theta\|^{2}}\right].
		\label{Riskdiff_MLE_JS_GS_L2}
	\end{align}
	Here, the third equality follows from Stein's lemma
	and from that $\partial(X/\|X\|^{2}) / \partial X_{i} = (\|X\|^{2}-2X_{i}) / \|X\|^{4}$.
	The last inequality follows from Jensen's inequality and from that $\mathrm{E}_{X|\theta}\|X\|^{2}=d+\|\theta\|^{2}$.
	Thus, we complete the proof.
\end{proof}

\begin{lem}
	For any $d(\in\mathbb{N})>2$, we have
\begin{align*}
	\mathcal{R}(\Theta,\hat{\theta}_{\mathrm{JS}})\leq 2-\frac{4}{d}.
\end{align*}
\end{lem}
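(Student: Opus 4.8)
The plan is to bound $\mathcal{R}(\Theta,\hat{\theta}_{\mathrm{JS}})$ from above by invoking Lemma \ref{upper_maximin} with a conjugate Gaussian prior, for which both the integrated risk of $\hat{\theta}_{\mathrm{JS}}$ and the Bayes risk are available in closed form. Concretely, I would take $\Pi=\otimes_{i=1}^{d}\mathcal{N}(0,\tau^{2})$ for a scale $\tau^{2}$ to be chosen, so that Lemma \ref{upper_maximin} reduces the problem to estimating $\int_{\Theta}R(\theta,\hat{\theta}_{\mathrm{JS}})\mathrm{d}\Pi(\theta)-b(\Pi)$, where $b(\Pi)$ is the Bayes risk of the conjugate prior. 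The conjugacy is what makes every quantity explicit.

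For the integrated risk I would use Stein's identity to write $R(\theta,\hat{\theta}_{\mathrm{JS}})=d-c(2(d-2)-c)\,\mathrm{E}_{X|\theta}[1/\|X\|^{2}]$, where $c$ is the shrinkage constant appearing in $\hat{\theta}_{\mathrm{JS}}$. Integrating against $\Pi$ and using that the marginal law of $X$ is $\otimes_{i=1}^{d}\mathcal{N}(0,1+\tau^{2})$, so that $\|X\|^{2}\sim(1+\tau^{2})\chi^{2}_{d}$ and hence $\mathrm{E}[1/\|X\|^{2}]=1/\{(1+\tau^{2})(d-2)\}$, gives a closed form for $\int R(\theta,\hat{\theta}_{\mathrm{JS}})\mathrm{d}\Pi$. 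Conjugacy also yields $b(\Pi)=d\tau^{2}/(1+\tau^{2})$, the sum of the $d$ posterior variances. Subtracting, the $d\tau^{2}/(1+\tau^{2})$ terms cancel and the excess risk collapses to $\{d-c(2(d-2)-c)/(d-2)\}/(1+\tau^{2})$, which equals $2/(1+\tau^{2})$ for the classical constant $c=d-2$. Choosing $\tau^{2}=2/(d-2)$ then produces exactly $2/\{1+2/(d-2)\}=2-4/d$, the asserted bound.

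The one genuine obstacle is that $\Pi$ is not supported on $\Theta=\{\|\theta\|^{2}\le d\}$, whereas Lemma \ref{upper_maximin} requires a prior on $\Theta$. Here the chosen scale works in our favour: since $\tau^{2}=2/(d-2)$ forces $\mathrm{E}_{\Pi}\|\theta\|^{2}=2d/(d-2)=\mathrm{O}(1)$, the prior concentrates on a ball of radius $\mathrm{O}(1)$ sitting deep inside the radius-$\sqrt{d}$ ball $\Theta$, so a chi-square tail bound shows that $\Pi(\Theta^{\mathrm{c}})$ is super-exponentially small in $d$. I would therefore replace $\Pi$ by its restriction to $\Theta$ and control the resulting corrections to the integrated risk and to the Bayes risk exactly as in the proof of Theorem \ref{thm:Sobolevweakadmissibility_S}; because the excess risk $2/(1+\tau^{2})$ is strictly decreasing in $\tau^{2}$, taking $\tau^{2}$ a hair above $2/(d-2)$ leaves room to absorb these negligible corrections while still concluding $\mathcal{R}(\Theta,\hat{\theta}_{\mathrm{JS}})\le 2-4/d$. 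The difficulty is thus one of bookkeeping in the restriction step rather than anything conceptual, the conjugate computation being the heart of the argument.
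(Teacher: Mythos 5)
Your conjugate computation is correct and is a genuinely different route from the paper's. The paper's proof is a one-liner: it applies Lemma \ref{upper_maximin} with $\Pi=\delta_{0}$, the point mass at the origin (which lies in $\Theta$, so no support issue arises), giving $\mathcal{R}(\Theta,\hat{\theta}_{\mathrm{JS}})\leq R(0,\hat{\theta}_{\mathrm{JS}})$, and then bounds $R(0,\hat{\theta}_{\mathrm{JS}})$ via the display (\ref{Riskdiff_MLE_JS_GS_L2}). It is worth noting that your route is arguably the \emph{only} way to reach the constant $2-4/d$: since $\mathrm{E}[1/\chi^{2}_{d}]=1/(d-2)$, one has $R(0,\hat{\theta}_{\mathrm{JS}})=d-(d-2)^{2}/(d-2)=2$ exactly, and the Jensen step in (\ref{Riskdiff_MLE_JS_GS_L2}) yields only $R(0,\hat{\theta}_{\mathrm{JS}})\leq d-(d-2)^{2}/d=(4d-4)/d$, so the paper's deduction of $(2d-4)/d$ from that display appears to involve a slip; your identity $\int R(\theta,\hat{\theta}_{\mathrm{JS}})\,\mathrm{d}\Pi-b(\Pi)=2/(1+\tau^{2})$, evaluated at $\tau^{2}=2/(d-2)$, is what genuinely produces $2-4/d$.

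However, there is a real gap in your restriction step relative to the statement as written, which claims the bound \emph{for every} $d>2$. With $\tau^{2}=2/(d-2)$ you have $\Pi(\Theta^{\mathrm{c}})=\mathrm{Pr}\bigl(\chi^{2}_{d}>d(d-2)/2\bigr)$, which is indeed super-exponentially small as $d\to\infty$, but is not small for the $d$ the lemma covers: at $d=3$ it is $\mathrm{Pr}(\chi^{2}_{3}>1.5)\approx 0.68$, at $d=4$ it is $\mathrm{Pr}(\chi^{2}_{4}>4)\approx 0.41$. The corrections in the Theorem \ref{thm:Sobolevweakadmissibility_S}-style bookkeeping are of size roughly $d\,\Pi(\Theta^{\mathrm{c}})+\sqrt{\mathrm{E}_{\Pi}\|\theta\|^{4}}\,\Pi^{1/2}(\Theta^{\mathrm{c}})$, which at $d=3$ is several units --- an order of magnitude larger than the entire target $2-4/d\approx 0.67$ --- and the slack you create by nudging $\tau^{2}$ above $2/(d-2)$ is at most $\mathrm{O}(1)$ and cannot absorb it. So your argument proves $\mathcal{R}(\Theta,\hat{\theta}_{\mathrm{JS}})\leq 2-4/d$ only for all $d\geq d_{0}$ with some finite $d_{0}$; this suffices for the asymptotic use $\mathcal{R}(\Theta,\hat{\theta}_{\mathrm{JS}})\lesssim 1$ in Theorem \ref{thm:Gaussianweakadmissibility}, but it does not establish the lemma for every $d>2$. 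If you want a clean non-asymptotic bound you should fall back on the paper's $\Pi=\delta_{0}$, which gives $\mathcal{R}(\Theta,\hat{\theta}_{\mathrm{JS}})\leq 2$ for all $d>2$ with no restriction argument at all.
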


\begin{proof}
	Applying Lemma \ref{upper_maximin} with $\Pi:=\delta_{0}$ yields
	\begin{align*}
		\mathcal{R}(\Theta,\hat{\theta}_{\mathrm{JS}})
		\leq 
		R(0,\hat{\theta}_{\mathrm{JS}}).
	\end{align*}
	From (\ref{Riskdiff_MLE_JS_GS_L2}), we have $R(0,\hat{\theta}_{\mathrm{JS}})\leq (2d-4)/d$,
	which completes the proof.
\end{proof}

\section{Calculation used in Theorem \ref{thm:Sobolevweakadmissibility_G}}\label{Appendix:details_firstterm}
In this appendix,
we provide a calculation that is used in Theorem \ref{thm:Sobolevweakadmissibility_G}.
Let
\begin{align*}
	f(x,y):=\frac{y^{2}x^{2\alpha+2}}{(1+yx^{2\alpha+1})^{2}}.
\end{align*}
Then,
the risk difference of $R(\theta,\hat{\theta}_{\mathrm{G}})-R(\theta,\hat{\theta}_{\mathrm{G}(s)})$
is
\begin{align}
R(\theta,\hat{\theta}_{\mathrm{G}})-&R(\theta,\hat{\theta}_{\mathrm{G}(s)})
	\nonumber\\
	=&\frac{\sum_{j=1}j^{2\alpha}\theta_{j}^{2}}{n^{-1}} \sum_{i=1}^{\infty}
	\frac{i^{2\alpha} \theta_{i}^{2}}{\sum_{j=1}^{\infty}j^{2\alpha}\theta^{2}_{j}}
	\left\{f(i,1/n)-f(i,1/\{ns\})\right\}
	\nonumber\\
	&+\sum_{i=1}^{\infty}\left\{\left(\frac{1}{1+i^{2\alpha+1}/n}\right)^{2}-\left(\frac{1}{1+i^{2\alpha+1}/\{n\widetilde{B}\}}\right)^{2} \right\}.
	\label{Riskdifference_1}
\end{align}
We will show that the following inequality holds:
\begin{align*}
R&(\theta,\hat{\theta}_{\mathrm{G}})-R(\theta,\hat{\theta}_{\mathrm{G}(s)})
\nonumber\\
\geq&B\left(\frac{2\alpha+2}{4\alpha+1}\right)^{\frac{2\alpha+2}{2\alpha+1}}n^{-\frac{2\alpha}{2\alpha+1}}
	\left[
		\frac{1}{\{1+\frac{2\alpha+2}{4\alpha+1}\}^{2}}-\frac{s^{-2}}{\{1+\frac{2\alpha+2}{s(4\alpha+1)}\}^{2}}
	\right]
	\nonumber\\
	&+n^{-1}\sum_{i=1}^{\infty}\left\{\left(\frac{1}{1+i^{2\alpha+1}/n}\right)^{2}-\left(\frac{1}{1+i^{2\alpha+1}/\{ns\}}\right)^{2}\right\}.
\end{align*}

To derive this,
we show that the lower bound of $f(i,1/n)-f(i,1/\{nB\})$ for all $i\in\mathbb{N}$ is given by
\begin{align}
	f(i,1/n)-f(i,1/\{n\widetilde{B}\})\geq f(x^{*},1/n)-f(x^{*},1/\{ns\}),
	\label{lowerbound_f}
\end{align}
where
\begin{align}
	x^{*}:=\left(\{2\alpha+2\}/\{4\alpha+1\}\right)^{\frac{1}{2\alpha+1}}n^{\frac{1}{2\alpha+1}}.
	\label{def_x}
\end{align}
For a fixed $y$,
\begin{align*}
	\partial f(x,y) / \partial x
	=[y^{2} x^{2\alpha+1}\{(2\alpha+2)-2\alpha y x^{2\alpha+1}\}]/(1+yx^{2\alpha+1})^{3}.
\end{align*}
Letting 
\begin{align*}
	g(y;x) := \{(2\alpha+2)x^{2\alpha+1}y^2-(2\alpha)x^{4\alpha+2}y^{3}\} / (1+yx^{2\alpha+1})^{3}
\end{align*}
yields
\begin{align*}
\partial f(x,y)/\partial x- \partial f(x,y/\sqrt{s}) / \partial x = g(y;x)-g(y/\sqrt{s};x).
\end{align*}
Since
\begin{align*}
	\frac{\partial g(y;x)}{\partial y}=\frac{x^{2\alpha+1}y}{(1+x^{2\alpha+1}y)^{4}}\left\{(4\alpha+4)-(8\alpha+2)x^{2\alpha+1}y\right\},
\end{align*}
we have
\begin{align*}
 \partial f(x,y) / \partial x - \partial f(x,y/\sqrt{s}) / \partial x \geq 0 \text{ if $x^{2\alpha+1}y\geq (4\alpha+4)/(8\alpha+2)$}
\end{align*}
and
\begin{align*}
 \partial f(x,y) / \partial x - \partial f(x,y/\sqrt{s}) / \partial x \leq 0 \text{ if $x^{2\alpha+1}y\leq (4\alpha+4) / (8\alpha+2) $}.
\end{align*}
Therefore,
\begin{align*}
	f(i,1/n)-f(i,1/\{ns\})\geq f(x^{*},1/n)-f(x^{*},1/\{ns\}).
\end{align*}

From the inequality that $\sum_{i=1}^{\infty}p(i)f(i)\geq L$ 
for a function $f:\mathbb{N}\to [L,\infty)$ with some $L\in\mathbb{R}$
and for a probability $p$ on $\mathbb{N}$,
and
from the inequality (\ref{lowerbound_f}),
regarding $(1^{2\alpha}\theta_{1}^{2},2^{2\alpha}\theta_{2}^{2},\ldots)/\sum_{j=1}^{\infty}j^{2\alpha}\theta_{j}^{2}$ as a probability
completes the proof.

\bibliographystyle{imsart-nameyear}
\bibliography{Weakadmissible}
\end{document}